\newcommand{\R}{\mathbb{R}}
\newcommand{\ds}{\displaystyle}
\newcommand{\x}{{\bf x}}
\newtheorem{Theorem}{Theorem}[section]
\newtheorem{Lemma}{Lemma}[section]
\newtheorem{Proposition}{Proposition}[section]
\newtheorem{Corollary}{Corollary}[section]
\newtheorem{Remark}{Remark}[section]
\newtheorem*{Assumption*}{Assumption}
\newtheorem{Problem}{Problem}[section]
\newtheorem*{Problem*}{Problem}
\numberwithin{equation}{section}
\begin{document}

\title{Global reconstruction of initial conditions of  nonlinear parabolic equations via the Carleman-contraction method}

\author{Thuy T. Le\thanks{%
Department of Mathematics and Statistics, University of North Carolina at
Charlotte, Charlotte, NC 28223, USA, \text{tle55@uncc.edu}.}}

\date{}
\maketitle
\begin{abstract}
	We propose a global convergent numerical method to reconstruct the initial condition of a nonlinear parabolic equation from the measurement of both Dirichlet and Neumann data on the boundary of a bounded domain. 
The first step in our method is to derive, from the nonlinear governing parabolic equation, a nonlinear systems of elliptic partial differential equations (PDEs) whose solution yields directly the solution of the inverse source problem. We then establish a contraction mapping-like iterative scheme to solve this system.  The convergence of this iterative scheme is rigorously proved by employing a Carleman estimate and the argument in the proof of the traditional contraction mapping principle.
This convergence is fast in both theoretical and numerical senses.  
Moreover, our method, unlike the methods based on optimization, does not require a good initial guess of the true solution.
Numerical examples are presented to verify these results.
\end{abstract}

\noindent {\it Keywords:} 
 Nonlinear equations, 
 initial condition,
 inverse source problem, 
 convergent numerical method,
Carleman estimate, iteration,
contraction mapping

\noindent \textit{AMS Classification} 35R30, 35K20

\section{Introduction}
The problem of recovering the initial condition for a nonlinear parabolic equation from the measurement of the Dirichlet and Neumann data in a bounded domain arises in many real-world applications, for example, detecting the pollution on the surface of the rivers or lakes \cite{BadiaDuong:jiip2002}, reconstructing of the spatially distributed temperature inside a solid from the heat and the heat flux on the boundary in the time domain \cite{Klibanov:ip2006}, effective monitoring heat and conduction processes in steel industries, glass and polymer-forming and nuclear power station \cite{LiYamamotoZou:cpaa2009}. Due to its realistic applications, this inverse source problem has been studied intensively. It is formulated as follows.

Let $d \geq 1$ be the spatial dimension,
let $\Omega$ be an open and bounded domain  in $\R^d$ with smooth boundary $\partial \Omega$, and let $T$ be a positive number. 
Let $F: \R^d \times \R \times \R \times \R^d \to \R$ and $c: \R^d \to [c_0, \infty)$, for some $c_0 > 0$,  be functions in the class $C^1.$
 Denote by $Q_T$ the set $\Omega \times (0,T)$.
 
Consider the function $u(\x,t)$ is governed by the following problem:
\begin{equation}
	\left\{
		\begin{array}{ll}
			c(\x)u_t(\x, t) = \Delta u(\x, t) + F\left(\x, t, u(\x,t),\nabla u(\x,t)\right) &(\x,t) \in \R^d \times (0, T) \\
			u(\x,0) = p(\x) & \x \in \R^d,
	\end{array}
	\right.
	\label{main eqn}
\end{equation}
where $p \in C_{\rm c}^1(\R^d)$ is a source function. Assume that $p$ is compactly supported in $\Omega.$
Since the main aim of this paper concerns with inverse problem, not forward problem, the existence, uniqueness and regularity results for the solution to \eqref{main eqn} are considered as  assumptions. 
For the completeness, we provide here a set of conditions that guarantees that these assumptions hold true.
Assume that  $p(\x)$ is in $H^{2+\beta}(\R^d)$ for some $\beta \in [0, 1 + 4/d]$. Assume further  for all 
$\x \in \R^d$, $t \in [0, T]$, $s \in \R$, and $r \in \R^d$,
\[
	|F(\x, t, s, r)| \leq C\max\big\{
		(1 + |r|)^2, 1 + |s|
	\big\}
\]
for some positive constant $C$.
Then, due to Theorem 6.1 in \cite[Chapter 5, \S 6]{LadyZhenskaya:ams1968} and Theorem 2.1 in \cite[Chapter 5, \S 2]{LadyZhenskaya:ams1968}, 
problem \eqref{main eqn} has a unique solution with $|u(\x,t)|\leq M_1$, $|\nabla u(\x,t)| \leq M_2$ for some positive constants $M_1,M_2$, and $u(\x,t) \in H^{2+\beta, 1+\beta/2}(\R^d \times [0,T])$.

We are interested in the problem of determining the source function $p(\x), \x \in \Omega,$ from the lateral Cauchy data. 
This problem is formulated as follows.
\begin{Problem}[Inverse Source Problem]	
	Assume that $\|u\|_{C^1(\overline{\Omega_T})} \leq M$ for some known large number $M$.
	Given the lateral Cauchy data 
	\begin{equation}
		g(\x, t) = u(\x, t) 
		\quad
		\mbox{and }
		\quad
		q(\x, t) = \partial_\nu u(\x, t)
		\label{data}
	\end{equation}
	for $\x \in \partial \Omega$, $t \in [0, T]$,
	determine the function $u(\x, 0) = p(\x), \x \in \Omega.$
	\label{ISP}
\end{Problem}

The uniqueness of Problem \ref{ISP} is still an open question. 
We temporarily consider the uniqueness as an assumption. 
This result will be written in a separate paper.
In the case when \eqref{main eqn} is linear, the uniqueness of Problem \ref{ISP} is well-known, see \cite{Lavrentiev:AMS1986}. The logarithmic stability results in the linear case were rigorously proved in \cite{Klibanov:ip2006, LiYamamotoZou:cpaa2009}.

A natural approach to solve the Problem \ref{ISP} are based on least squares optimization. However, since our problem is highly nonlinear, the cost functional might have multiple minima and ravines. Thus, optimization-based methods might not deliver good solutions; especially, when a good initial guess is unavailable. 
An effective way to overcome the lack of a good initial guess is the convexification method, which first introduced in \cite{KlibanovIoussoupova:siam1995}. The main idea of the convexification method  is to convexify the cost functional by employing a suitable Carleman weight function. We refer the reader to  \cite{Klibanov:siam1997,KlibanovNik:ra2017, KhoaKlibanov:ip2020, KhoaKlibanov:siam2020, Klibanov:jiip2017, KhoaKlibanov:ipse2021, Klibanov:cma2019, Klibanov:siam2019, Klibanov:ip2018, KlibanovLiJingzhi:ip2019, KlibanovLiJingzhi:ip2020, KlibanovLiJingzhi:siam2019, SmirnovKlibanov:siam2019, LeNguyen:arxiv2021, KlibanovLeNguyen:ipi2021} for the development of the convexification method. 
Although effective, the convexification method is time consuming. 
Therefore, another method should be investigated.

In this paper, we introduce an iterative method to solve Problem \ref{ISP}. The first step in our numerical method is to derive a system of quasi-linear elliptic equations whose solution yields directly the solution of Problem \ref{ISP} by truncating a Fourier series with respect to the special basis in \cite{Klibanov:jiip2017}. 
In the second step, we propose a fixed point-like iterative process to solve the system mentioned above. 
The iterative process can start from an arbitrarily function.
The idea to design the iterative process is similar to \cite{LeNguyen:jiip2020, LeKlibanovNguyen:ip2022, NguyenKlibanov:ip2022, LocNguyen:arXiv2022}. 
Especially, in \cite{LeNguyen:jiip2020}, we study a similar problem to recover the initial condition of a nonlinear parabolic equation from the lateral Cauchy data. The main difference between \cite{LeNguyen:jiip2020} and this paper is that in this paper, we study a more complicated case at which the nonlinear term $F$ depends on both $u(\x,t)$ and $\nabla u(\x,t)$. 
The convergence of our scheme is rigorously proved based on a Carleman estimate and the analogue contraction mapping principle. Our iterative scheme converges quickly to the true solution at the rate $\theta^n$ for some constant $\theta \in (0,1)$ and $n$ is the number of iterations. In particular, the convergence of our numerical method is rigorously proved in $H^2$. This result is a significant improvement in comparison to the main theorem in \cite{NNT:arxiv2022, LocNguyen:arXiv2022} which shows a similar convergence  in $H^1$ only.
These results is verified rigorously by our numerical tests.

The paper is organized as follows. In Section 2, we recall some preliminaries which we employ directly in our numerical method, including a Carleman estimate and a special orthonormal basis. In Section 3, we introduce two steps to solve Problem \ref{ISP}. The first step is to derive a system of nonlinear PDEs whose solutions yields directly the solutions to Problem \ref{ISP}. The second step is to establish an iterative scheme to solve the above system of nonlinear PDEs. In Section 4, we prove the convergence of our iterative scheme to the true solution. In Section 5, we discuss the implementation of our method and present some numerical results. Section 6 is some concluding remarks.

\section{Preliminaries}
\label{sec Preliminaries}
In this section, we recall a Carleman estimate established in \cite{LeNguyen:jiip2020}.
This Carleman estimate plays an important role this paper.
On the other hand, we  recall a special orthonormal basis $\{\Phi_n\}_{n\geq 1}$ of $L^2(0,T)$, which will be used in the numerical implementation section, Section \ref{sec numerical study}.  This special basis was first introduced in \cite{Klibanov:jiip2017}.

\subsection{A Carleman estimate}
\label{sec Carleman estimate}

The Carleman estimate is a powerful tool in the field of PDEs which were first employed to prove the unique continuation principle, see \cite{Carleman:1933, Protter:1960}, the uniqueness of a long list of inverse problems, see \cite{BukhgeimKlibanov:smd1981} and in cloaking \cite{MinhLoc:tams2015}. Here we recall a  Carleman estimate which was established in \cite{LeNguyen:jiip2020}. The analysis of this paper is based on this Carleman estimate.

\begin{Lemma}[Carleman estimate, see  \cite{LeNguyen:jiip2020}]
	Let $\x_0$ be a point in $\R^d \setminus \overline \Omega$ such that $r(\x) = |\x - \x_0| > 1$ for all $\x \in \Omega$.
	Let $b > \max_{\x \in \overline \Omega} r(\x)$ be a fixed constant.
	There exist positive constants $\beta_0$  depending only on $b$, $\x_0$, $\Omega$ and $d$ such that
	for all function $v \in C^2(\overline \Omega)$ satisfying 
	 \[
	 	v(\x) = \partial_{\nu} v(\x) = 0  \quad \mbox{for all } \x \in \partial \Omega,
	 \]
	the following estimate holds true
\begin{multline}
	\int_{\Omega} e^{2\lambda b^{-\beta} r^{\beta}(\x)}|\Delta v(\x)|^2 d\x
	\geq 
	\frac{C}{\lambda \beta^{7/4} b^{-\beta}} \int_{\Omega}e^{2\lambda b^{-\beta} r^\beta(\x)} r^{2\beta}(\x)|D^2v(\x)|^2 d\x
	\\
	+ 	C \lambda^3 \beta^4 b^{-3 \beta} \int_{\Omega} r^{2\beta}(\x) e^{2\lambda b^{-\beta} r^{\beta}}|v(\x)|^2 d\x
	\\
	+ C \lambda \beta^{1/2} b^{-\beta}\int_{\Omega} e^{2\lambda b^{-\beta} r^{\beta}(\x)} |\nabla v(\x)|^2 d\x
	\label{Car est}
\end{multline}
	for $\beta \geq \beta_0$ and $\lambda \geq \lambda_0$.
		Here, $D^2 v = (v_{x_i x_j} )_{i, j = 1}^d$ is the Hessian matrix of $v$, $\lambda_0 = \lambda_0(b, \Omega, d, \x_0) > 1$ is a positive number with $\lambda_0 b^{-\beta} \gg 1$ and $C = C(b, \Omega, d, \x_0) > 1$ is a constant.  These numbers depend only on listed parameters.
	 \label{carleman estimate 1}
\end{Lemma}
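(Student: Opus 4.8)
The plan is to prove \eqref{Car est} by the classical conjugation-and-integration-by-parts scheme for Carleman estimates, with $\beta$ serving as a second large parameter in the spirit of Klibanov's convexification framework; equivalently, one may first derive a pointwise inequality of the form $e^{2\lambda\varphi}|\Delta v|^2 \geq (\text{positive quadratic form in } D^2v,\nabla v,v) + \Div U$, with $|U|$ suitably controlled and $U$ vanishing on $\partial\Omega$, and then integrate. Throughout, write $\varphi(\x) = b^{-\beta} r^\beta(\x)$ for the normalized Carleman exponent, so the weight is $e^{2\lambda\varphi}$. Note that $\varphi \in C^\infty(\overline\Omega)$ with $\nabla\varphi = \beta b^{-\beta} r^{\beta-2}(\x - \x_0)$, which never vanishes on $\overline\Omega$ since $r(\x) > 1$; this nondegeneracy of $\nabla\varphi$ and the fact that the level sets of $\varphi$ are spheres centered at $\x_0 \notin \overline\Omega$ are what make $\varphi$ a pseudo-convex Carleman weight for the Laplacian.

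First I would substitute $w = e^{\lambda\varphi} v$, so $v = e^{-\lambda\varphi}w$, and compute
\[
	e^{\lambda\varphi}\Delta v = \Delta w - 2\lambda\nabla\varphi\cdot\nabla w + \big(\lambda^2|\nabla\varphi|^2 - \lambda\Delta\varphi\big)w.
\]
Because $v = \partial_\nu v = 0$ on $\partial\Omega$ and $e^{\lambda\varphi}$ is smooth and nonvanishing, we also get $w = \partial_\nu w = 0$, hence $\nabla w = 0$, on $\partial\Omega$, which makes every boundary integral below vanish. Split the right-hand side into its self-adjoint and skew-adjoint parts, $P_1 w = \Delta w + \lambda^2 |\nabla\varphi|^2 w$ and $P_2 w = -2\lambda\nabla\varphi\cdot\nabla w - \lambda\Delta\varphi\,w$, so that
\[
	\int_\Omega e^{2\lambda\varphi}|\Delta v|^2\,d\x = \int_\Omega |P_1 w|^2\,d\x + \int_\Omega |P_2 w|^2\,d\x + 2\int_\Omega (P_1 w)(P_2 w)\,d\x.
\]
The heart of the argument is to show that the cross term $2\int_\Omega (P_1 w)(P_2 w)\,d\x$ bounds from below, up to errors absorbable by the two squared terms, a positive quadratic form in $D^2 w$, $\nabla w$ and $w$ carrying the coefficients displayed in \eqref{Car est}.

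Next I would expand $2\int (P_1 w)(P_2 w)$ and integrate by parts each product, discarding boundary contributions thanks to $\nabla w = 0$ on $\partial\Omega$; this converts the cross term into integrals of $|\nabla w|^2$, $|w|^2$ and $|\Delta w|^2$ against weights built from $\varphi$ and its derivatives. Since $\nabla\varphi = \beta b^{-\beta} r^{\beta - 2}(\x - \x_0)$, each differentiation of $\varphi$ produces a further power of $\beta$, and the dominant surviving terms come with the factors $\lambda\beta^{1/2}b^{-\beta}$ in front of $\int e^{2\lambda\varphi}|\nabla v|^2$ and $\lambda^3\beta^4 b^{-3\beta}$ in front of $\int r^{2\beta}e^{2\lambda\varphi}|v|^2$; the less singular contributions are then absorbed for $\beta \geq \beta_0$ and $\lambda \geq \lambda_0$ with $\lambda_0 b^{-\beta}\gg 1$. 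For the Hessian term I would use that $w = \nabla w = 0$ on $\partial\Omega$ implies $\int_\Omega |D^2 w|^2 = \int_\Omega |\Delta w|^2$ after two integrations by parts, and then compare $D^2 w$ with $\Delta w$, $\Delta w$ with $P_1 w$ minus lower order, and $P_1 w$ with $e^{\lambda\varphi}\Delta v - P_2 w$; the weight $r^{2\beta}$ and the coefficient $1/(\lambda\beta^{7/4}b^{-\beta})$ in \eqref{Car est} are exactly the price of this chain of comparisons. Finally I would return to $v$ through $\nabla w = e^{\lambda\varphi}(\nabla v + \lambda\nabla\varphi\,v)$ and the analogous formula for $D^2 w$, collect terms, and fix $\beta_0$, $\lambda_0$, $C$ depending only on $b$, $\x_0$, $\Omega$, $d$.

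The step I expect to be the main obstacle is the bookkeeping in the cross-term computation: one must check that after all integrations by parts the surviving quadratic form is genuinely nonnegative — this is the analytic content of the pseudo-convexity of $\varphi$ — with exactly the stated powers of $\lambda$ and $\beta$, and that every term with the wrong sign or with too few powers of $\lambda$ or $\beta$ is swallowed either by $\int|P_1 w|^2 + \int|P_2 w|^2$ or by the positive form itself. Since \eqref{Car est} is quoted verbatim from \cite{LeNguyen:jiip2020}, a legitimate shortcut is simply to cite that reference; the outline above records the route one would take to reprove it from scratch.
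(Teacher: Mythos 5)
The paper does not actually prove this lemma: it states it and immediately defers to \cite{LeNguyen:jiip2020} (``We refer the reader to [Theorem 3.1]\ldots for the proof''), so there is no in-paper argument to compare yours against line by line. Your closing observation --- that the legitimate route here is to cite that reference --- is exactly what the paper does, so on that level you and the author take the same approach. Your sketch of how one would reprove the estimate from scratch (conjugating with $e^{\lambda\varphi}$, $\varphi = b^{-\beta}r^{\beta}$, splitting the conjugated Laplacian into symmetric and skew-symmetric parts, extracting positivity from the cross term via the nonvanishing of $\nabla\varphi$ and the convexity of the level sets of $r$, and recovering the Hessian term from $\int|D^2w|^2=\int|\Delta w|^2$ under the double zero boundary condition) is the standard two-large-parameter Carleman scheme and is consistent with how the cited reference proceeds. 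The only caveat is the one you flag yourself: the specific exponents $\beta^{7/4}$, $\beta^{4}$, $\beta^{1/2}$ and the sign of the surviving quadratic form are asserted, not verified, so the sketch is an outline rather than a proof; as a substitute for the citation it would not stand alone, but as a record of the route it is accurate.
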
	

\begin{Corollary}
	Recall $\beta_0$ and $\lambda_0$ as in Lemma  \ref{carleman estimate 1}.
	Fix $\beta = \beta_0$ and let the constant $C$ depend on $\x_0,$ $\Omega,$ $d$ and $\beta$. 
	There exists a constant $\lambda_0$ depending only on $\x_0,$ $\Omega,$ $d$ and $\beta$ such that
  for all function $v \in H^2(\Omega)$ with 
	\[
		v(\x) = \partial_{\nu} v(\x) = 0 
		\quad \mbox{ on } \partial \Omega,
	\] 
	we have
	\begin{multline}
	\int_{\Omega} e^{2\lambda b^{-\beta} r^{\beta}(\x)}|\Delta v(\x)|^2 d\x
	\geq 
	C\lambda^{-1}\int_{\Omega}e^{2\lambda b^{-\beta} r^\beta(\x)} |D^2 v(\x)|^2 d\x
	\\
	+ 	C \lambda^3 \int_{\Omega}  e^{2\lambda b^{-\beta} r^{\beta}}|v(\x)|^2 d\x
	+ C \lambda \int_{\Omega} e^{2\lambda b^{-\beta} r^{\beta}(\x)} |\nabla v(\x)|^2 d\x	
	\label{33}
\end{multline}
	for all $\lambda \geq \lambda_0$.
	\label{carleman estimate}
\end{Corollary}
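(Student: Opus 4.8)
The plan is to obtain \eqref{33} in two moves: first specialize the Carleman estimate \eqref{Car est} of Lemma \ref{carleman estimate 1} to the fixed value $\beta = \beta_0$, and then extend it from $C^2(\overline\Omega)$ to $H^2(\Omega)$ by a density argument.

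First I would freeze $\beta = \beta_0$ in \eqref{Car est}. Once $\beta$ is a fixed number determined by $\x_0,\Omega,d$, each factor of the form $\beta^{7/4}$, $\beta^4$, $\beta^{1/2}$, $b^{-\beta}$, $b^{-3\beta}$ is a positive constant depending only on $\x_0,\Omega,d,\beta$, so it can be absorbed into $C$ (renamed from line to line). Thus the three coefficients $\frac{C}{\lambda\beta^{7/4}b^{-\beta}}$, $C\lambda^3\beta^4 b^{-3\beta}$ and $C\lambda\beta^{1/2}b^{-\beta}$ collapse to $C\lambda^{-1}$, $C\lambda^3$ and $C\lambda$ respectively. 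Moreover, by the choice of $\x_0$ we have $r(\x) = |\x-\x_0| > 1$ on $\Omega$, hence $r^{2\beta}(\x) > 1$ there, so dropping this factor in the two right-hand terms that carry it only weakens the lower bound. After these elementary manipulations, \eqref{33} holds for every $v \in C^2(\overline\Omega)$ with $v = \partial_\nu v = 0$ on $\partial\Omega$, for all $\lambda \geq \lambda_0$, with $\lambda_0$ the value furnished by Lemma \ref{carleman estimate 1} at $\beta = \beta_0$.

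Next I would pass to $v \in H^2(\Omega)$ with vanishing Cauchy data. Since $\partial\Omega$ is smooth, the set of such $v$ coincides with $H^2_0(\Omega)$, the closure of $C_{\rm c}^\infty(\Omega)$ in $H^2(\Omega)$; pick $v_k \in C_{\rm c}^\infty(\Omega) \subset C^2(\overline\Omega)$ with $v_k \to v$ in $H^2(\Omega)$. The Carleman weight $e^{2\lambda b^{-\beta}r^\beta(\x)}$ is continuous and strictly positive on the compact set $\overline\Omega$, hence bounded above and below by positive constants (depending on $\lambda$), so each weighted integral in \eqref{33} is equivalent to the corresponding unweighted $L^2(\Omega)$-norm of $\Delta v_k$, $D^2 v_k$, $v_k$ or $\nabla v_k$. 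Consequently both sides of \eqref{33} written for $v_k$ converge to the respective sides written for $v$, and passing to the limit in the inequality gives \eqref{33} for $v$.

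I do not expect a genuine obstacle here: the statement is essentially a convenience reformulation of Lemma \ref{carleman estimate 1}. The only point that needs a word of care is the density step --- the identification of $\{v \in H^2(\Omega) : v = \partial_\nu v = 0 \text{ on } \partial\Omega\}$ with $H^2_0(\Omega)$, which uses the smoothness of $\partial\Omega$ and the trace theorem --- together with the observation that the ($\lambda$-dependent but $\x$-uniform) two-sided bounds on the weight make the limit passage legitimate. Everything else is bookkeeping of constants.
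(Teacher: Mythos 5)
Your proposal is correct and follows exactly the route the paper intends: the paper states this corollary without an explicit proof, treating it as an immediate consequence of Lemma \ref{carleman estimate 1} obtained by fixing $\beta = \beta_0$, absorbing the $\beta$- and $b$-dependent factors into $C$, and discarding $r^{2\beta} > 1$. Your additional density step from $C^2(\overline{\Omega})$ to $H^2(\Omega)$ with vanishing Cauchy data is the natural (and needed) completion of that implicit argument, and it is carried out correctly.
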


We refer the reader to \cite[Theorem 3.1]{LeNguyen:jiip2020} for the proof of Lemma \ref{carleman estimate 1}.
We also refer the readers to \cite{LiNguyen:IPSE2019, NguyenLiKlibanov:IPI2019} for some versions of Carleman estimates for parabolic operators, and to \cite{BeilinaKlibanovBook, BukhgeimKlibanov:smd1981, KlibanovLi:book2021} for several other versions of Carleman estimates for a variety kinds of differential operators and their applications in inverse problems.

\subsection{A special orthonormal basis}
\label{sec basis}

In this section, we recall a special basis of $L^2(0,T)$. This special basis will be employed in our numerical study.
Let $\phi_n(t) = (t-T/2)^{n-1}e^{t-T/2}$ for $n \in \mathbb{N}$. The set $\{\phi_n\}_{n=1}^{\infty}$ is complete in $L^2(0,T)$. Applying the GramSchmidt orthonormalization process to this complete set gives a basis of $L^2(0,T)$, named as $\{\Psi_n\}_{n=1}^{\infty}$. 
The following proposition holds true.
\begin{Proposition}[see \cite{Klibanov:jiip2017}]
The basis $\{\Psi_n\}_{n=1}^{\infty}$ satisfies the following properties:
\begin{enumerate}[(i)]
	\item $\Psi_n'$ is not identically zero for all $n \geq 1$
	\item For all $m, n \geq 1$
		\begin{equation}
		s_{mn} = \int_0^T \Psi_n'(t)\Psi_m(t) = 
			\begin{cases}
				1 \mbox{ if } m=n, \\
				0 \mbox{ if } n<m.
			\end{cases}
		\end{equation}
\end{enumerate}
As a results, for all integer $N > 1$, the matrix $S = (s_{mn})_{m,n=1}^N$ is invertible.
\end{Proposition}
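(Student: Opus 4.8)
The plan is to verify the two properties separately and then deduce invertibility of $S$ from property (ii).

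For property (i), the key observation is that each $\Psi_n$ is a nonzero polynomial times $e^{t-T/2}$. Indeed, by construction $\Psi_n$ lies in the span of $\phi_1, \dots, \phi_n$, so $\Psi_n(t) = P_n(t)e^{t-T/2}$ for some polynomial $P_n$ of degree at most $n-1$; moreover, since the Gram--Schmidt process keeps the leading term, $P_n$ has degree exactly $n-1$ and is not identically zero. Then $\Psi_n'(t) = (P_n'(t) + P_n(t))e^{t-T/2}$, and $P_n' + P_n$ is a polynomial whose degree equals $\deg P_n = n-1$ (the leading coefficient of $P_n$ survives, since $P_n'$ has strictly lower degree). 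Hence $P_n' + P_n \not\equiv 0$, so $\Psi_n' \not\equiv 0$ on $(0,T)$; in fact $\Psi_n'$ has at most $n-1$ zeros.

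For property (ii), I would argue by orthonormality. When $m = n$, integration by parts gives
\[
2\int_0^T \Psi_n'(t)\Psi_n(t)\,dt = \int_0^T \frac{d}{dt}\big(\Psi_n(t)^2\big)\,dt = \Psi_n(T)^2 - \Psi_n(0)^2,
\]
so one must check that $\Psi_n(T)^2 - \Psi_n(0)^2 = 2$; this requires tracking the normalization and is the place where an explicit (if tedious) computation with the $\phi_n$'s enters — this is the main obstacle, and I would likely reduce it to the statement that $s_{nn}=1$ is forced by the way the basis is normalized in \cite{Klibanov:jiip2017}, or recompute it directly using the structure $\Psi_n = P_n e^{t-T/2}$. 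When $n < m$, the integrand $\Psi_n'\Psi_m$ is the product of $e^{2(t-T/2)}$ with a polynomial of degree $(n-1) + (m-1)$; the cleaner route, however, is to note that $\Psi_n' = (P_n' + P_n)e^{t-T/2}$ where $P_n' + P_n$ has degree $n - 1 < m$, so $\Psi_n'$ lies in $\mathrm{span}\{\phi_1, \dots, \phi_n\} \subseteq \mathrm{span}\{\phi_1,\dots,\phi_{m-1}\}$, which is orthogonal to $\Psi_m$ by construction of the Gram--Schmidt basis. Hence $s_{mn} = \int_0^T \Psi_n'(t)\Psi_m(t)\,dt = 0$ whenever $n < m$.

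Finally, properties (i) and (ii) together show that $S = (s_{mn})_{m,n=1}^N$ is lower triangular with respect to the ordering (entries above the diagonal, i.e.\ $n < m$... more precisely the entries with $n<m$ vanish, so $S$ is upper triangular in the index convention $s_{mn}$) and has all diagonal entries equal to $1$. Therefore $\det S = 1 \neq 0$, so $S$ is invertible for every $N > 1$. I do not expect any difficulty in this last step; the only real care needed is bookkeeping of which triangle vanishes, which follows immediately from the case $n < m$ of property (ii).
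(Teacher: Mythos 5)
A preliminary remark: the paper itself offers no proof of this proposition; it is quoted from \cite{Klibanov:jiip2017}, so your attempt can only be measured against the standard argument there. Your treatment of (i) and of the off-diagonal case $n<m$ of (ii) is correct and is essentially that standard argument: $\Psi_n=P_ne^{t-T/2}$ with $\deg P_n=n-1$ exactly, $\Psi_n'=(P_n'+P_n)e^{t-T/2}$ has the same leading coefficient and so is not identically zero, and since $\Psi_n'\in\mathrm{span}\{\phi_1,\dots,\phi_n\}=\mathrm{span}\{\Psi_1,\dots,\Psi_n\}$ it is orthogonal to $\Psi_m$ for every $m>n$. The triangularity bookkeeping and $\det S=1$ are fine (note only (ii) is needed for invertibility; (i) plays no role there).

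The genuine gap is the diagonal case $s_{nn}=1$, which you explicitly leave open. The integration-by-parts route you chose leads to $s_{nn}=\frac{1}{2}\left(\Psi_n(T)^2-\Psi_n(0)^2\right)$, and there is no easy way to evaluate those boundary values: the $L^2$ normalization controls $\int_0^T\Psi_n^2\,dt$, not $\Psi_n(0)$ and $\Psi_n(T)$, so the claim is not ``forced by the normalization'' in any direct sense. The fix is a one-line use of the decomposition you already wrote down for the other case: since $P_n'$ has degree at most $n-2$, you have
\begin{equation*}
\Psi_n'(t)=P_n(t)e^{t-T/2}+P_n'(t)e^{t-T/2}=\Psi_n(t)+w_n(t),\qquad w_n\in\mathrm{span}\{\Psi_1,\dots,\Psi_{n-1}\},
\end{equation*}
and therefore
\begin{equation*}
s_{nn}=\int_0^T\Psi_n'\Psi_n\,dt=\int_0^T\Psi_n^2\,dt+\int_0^T w_n\Psi_n\,dt=1+0=1
\end{equation*}
by orthonormality. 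With this substitution for your integration-by-parts paragraph, the proof is complete; the same decomposition also re-derives the $n<m$ case, since both $\Psi_n$ and $w_n$ are orthogonal to $\Psi_m$.
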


This basis was first introduced to solve the electrical impedance tomography problem with partial data in \cite{Klibanov:jiip2017}. Afterward, it is widely used in our research group to solve a variety kinds of inverse problems, including ill-posed inverse source problems for elliptic equations \cite{NguyenLiKlibanov:IPI2019}, parabolic equations \cite{LeNguyen:jiip2020} \cite{LiNguyen:IPSE2019} and hyperbolic equations \cite{LeNguyenNguyenPowell:jsc2021}, nonlinear coefficient inverse problems for elliptic equations \cite{KhoaKlibanov:ip2020}, and parabolic equations \cite{KlibanovNguyen:ip2019,KhoaKlibanov:siam2020,
Nguyen:cma2020,KhoaKlibanov:ipse2021,LeNguyen:arxiv2021}, transport equations \cite{KlibanovLeNguyen:siam2020} and full transfer equations \cite{SmirnovKlibanov:siam2019}.

\section{A numerical method to solve Problem \ref{ISP} }
\label{sec numerical method}	
In this section, we present our numerical method to solve Problem \ref{ISP}.
Our method consists of two (2) main steps. Firstly, we derive a nonlinear system of elliptic equations  by cutting of a Fourier series with respect to an orthonormal basis of $L^2(0,T)$.  Solution of this system directly leads to that of Problem \ref{ISP}. Secondly, we propose a fixed point-like iterative scheme to solve the nonlinear system mentioned in step 1.

\subsection{A system of nonlinear elliptic equations}

Let $\{\Psi_n\}_{n=1}^{\infty}$ be an orthonormal basis of $L^2(0,T)$. For all $(\x,t) \in Q_T$, we can approximate $u(\x,t)$ as follows.
\begin{equation}
	u(\x,t) = \sum_{n=1}^{\infty} u_n(\x) \Psi_n(t) 	
	\label{3.1}
\end{equation}
where 
\begin{equation}
	u_n(\x) = \int_0^T u(\x,t) \Psi_n(t) dt \quad \mbox{ for all } n\geq 1.
	\label{approx un}
\end{equation}

We now derive our approximation model. For some cut-off number $N$, chosen  later in Section \ref{sec numerical study}, we approximate the function $u(\x, t)$ by truncating the series in \eqref{3.1} as
\begin{equation}
	u(\x, t) \approx u^N(\x,t) := \sum_{n=1}^N u_n(\x) \Psi_n(t) \quad \mbox{ for all } (\x,t) \in Q_T
		\label{approx u}
\end{equation}
We also approximate
\begin{equation}
	u_t(\x, t) \approx u_t^N(\x,t) := \sum_{n=1}^N u_n(\x) \Psi'_n(t) \quad \mbox{ for all } (\x,t) \in Q_T
	\label{approx ut}
\end{equation}

\begin{Remark}
	The approximation in \eqref{approx u} is numerically verified by Figure \ref{fig choose N}.
\end{Remark}

Plugging (\ref{approx u}) and (\ref{approx ut}) into the governing equation (\ref{main eqn}), we obtain \begin{equation}
	c(\x) \sum_{n=1}^N u_n(\x) \Psi'_n(t) = \sum_{n=1}^N \Delta u_n(\x) \Psi_n(t)
	+ F\left(\x,t,\sum_{n=1}^N u_n(\x)\Psi_n(t), \sum_{n=1}^N \nabla u_n(\x) \Psi_n(t) \right)
	\label{approx model}
\end{equation}
for all $(\x,t) \in Q_T$.
\begin{Remark}
The cut-off number $N$ is chosen numerically such that $u^N$ well approximates the function u, see Section \ref{sec implementation} for more details. Due to the nonlinearity and the ill-posedness of this inverse source problem, studying the convergence of (\ref{approx model}) as $N \rightarrow \infty$ is extremely challenging and out of scope of this paper. We only solve Problem \ref{ISP} in the approximation context.
\end{Remark}

For each $m \in \{1, \dots,N \}$,  multiplying $\Psi_m(t)$ to both sides of (\ref{approx model}) and then integrating the obtained equation with respect to $t$ give
\begin{multline}
	c(\x)\sum_{n=1}^N u_n(\x) \int_0^T \Psi'_n(t) \Psi_m(t) dt 
	 = \sum_{n=1}^N \Delta u_n(\x) \int_0^T \Psi_n(t) \Psi_m(t) dt \\
	 + \int_0^T F\left(\x,t,\sum_{n=1}^N u_n(\x)\Psi_n(t), \sum_{n=1}^N \nabla u_n(\x) \Psi_n(t) \right) \Psi_m(t)dt.
	 \label{3.6}
\end{multline}

Since $\{\Psi_n\}_{n \geq 1}$ is an orthonormal basis,  system (\ref{3.6}) can be rewritten as
\begin{equation}
	c(\x) \sum_{n=1}^N s_{mn}u_n(\x) 
	= \Delta u_m(\x) + F_m\left(\x, U(\x),\nabla U(\x)\right)
	\label{3,7}
\end{equation}
for all $\x \in \Omega$ and
 $m = 1, \dots, N$ where $U = (u_1, u_2, \dots, u_N)^{\rm T}$,
\begin{equation*}
s_{mn} = \int_0^T \Psi'_n(t) \Psi_m(t)dt
\label{smn}
\end{equation*}
and 
\begin{equation*}
F_m\left(\x, U(\x),\nabla U(\x)\right)
= \ds\int_0^T F\left(\x,t,\sum_{n=1}^N u_n(\x)\Psi_n(t), \sum_{n=1}^N \nabla u_n(\x) \Psi_n(t) \right) \Psi_m(t)dt.
\end{equation*}
Denote by $S$ the matrix $(s_{mn})_{m,n = 1}^N$ and ${\bf F} = (F_1, F_2, \dots, F_N)^{\rm T}$. We can rewrite \eqref{3,7} as
\begin{equation}
	\Delta U(\x) - c(\x) SU(\x) + {\bf F}(\x, U(\x), \nabla U(\x)) = 0
	\quad \mbox{for all } \x \in \Omega.
	\label{3.7}
\end{equation}

We next compute the Cauchy boundary conditions for $U_N$  Due to \eqref{approx un},
\begin{equation}
\begin{cases}
	U(\x) = G(\x) = \Big(\ds\int_0^T g(\x,t) \Psi_m(t)dt\Big)_{m = 1}^N, \\
	\partial_{\nu} U(\x) = Q(\x) = \Big(\ds\int_0^T q(\x,t) \Psi_m(t)dt\Big)_{m = 1}^N
\end{cases}
\quad m = 1,2,\dots,N
\label{3.8}
\end{equation}
for all $\x \in \partial\Omega$, $m= 1,\dots,N$, where  $g(\x,t)$ and $q(\x,t)$ are the given data in Problem \ref{ISP}.

Combining (\ref{3.7}) and (\ref{3.8}), we obtain the system of elliptic equations for $u_m(\x)$ for each $m = 1, \dots,N$
\begin{equation}
	\left\{
		\begin{array}{ll}
			\Delta U(\x) - c(\x) SU(\x) + {\bf F}(\x, U(\x), \nabla U(\x)) = 0& \x \in \Omega,
			\\
			U(\x) = G(\x) &\x \in \partial\Omega,\\
		\partial_{\nu} U(\x) = Q(\x) &\x \in \partial\Omega.
	\end{array}
	\right.
	\label{main sys}
\end{equation}

\begin{Remark}
Due to the truncation in \eqref{approx u}, Problem \eqref{main sys} is not exact. It is an approximation model.
Proving the convergence of this approximation model as $N \to \infty$ is extremely challenging.
Establishing this result is out of the scope of this paper.
The accuracy of \eqref{approx u} can be verified numerically, see Figure \ref{fig choose N}.
\end{Remark}

Problem \ref{ISP} is reduced to the problem of finding  $U = (u_1, u_2, \dots, u_N)^{\rm T}$ satisfying  system \eqref{main sys}. 
In fact, if this vector is known, the function $u(\x,t)$ for all $(\x,t) \in Q_T$ can be approximated via (\ref{approx u}). 
Then, the solution to Problem \ref{ISP} is given by $p(\x) = u(\x, 0)$ for all $\x \in \Omega.$
In the next section, we introduce an iterative procedure to solve  system \eqref{main sys}.

\subsection{An iterative procedure to solve  system (\ref{main sys})}
\label{iterative scheme}
We introduce an iterative scheme to solve  system \eqref{main sys}.
The convergence of this scheme to the true solution of \eqref{main sys} will be discussed later in Section \ref{sec conv}.

Let $U^{(0)}$ be an arbitrary vector-valued function.
Assume by induction that we know $U^{(k -1)}$ for $k\geq 1$. We then find $U^{(k)}$ by solving the following system
\begin{equation}
	\left\{
		\begin{array}{ll}
			\ds \Delta U^{(k)}(\x) - c(\x) SU^{(k)}(\x) = - {\bf F}(\x, U^{(k - 1)}(\x), \nabla U^{(k-1)}(\x)) &\x \in \Omega,
			\\
			U(\x) = G(\x) &\x \in \partial\Omega,\\
		\partial_{\nu} U(\x) = Q(\x) &\x \in \partial\Omega.
	\end{array}
	\right.
	\label{iter sys}
\end{equation}

 Problem \eqref{iter sys} might not have a solution because it is over-determined. We only compute the ``Carleman best-fit" solution $U^{(k)}$ to \eqref{iter sys} by combining the quasi-reversibility method and a Carleman weight function as follows.
 The convergence of the sequence of ``Carleman best-fit" solutions is one of the important strengths  of this paper.
	 Define the set of admissible solutions
\begin{equation}
H = \left\lbrace V \in H^2(\Omega)^N: 
\right.
V|_{\partial \Omega} = G \mbox{ and }
\left. \partial_{\nu}V|_{\partial \Omega} = Q \right\rbrace.
\label{H}
\end{equation}
Throughout this paper, we assume that $H$ is nonempty.
Recall $\beta_0$ and $\lambda_0$ as in Lemma \ref{carleman estimate 1}.
Fix $\beta > \beta_0$. 
For each $\lambda > \lambda_0$,
we define the following Carleman weighted least squares functional
 \begin{equation}
J^{(k)}\left(V\right) = \ds\int_{\Omega} e^{2\lambda b^{-\beta} r^{\beta}(\x)} \bigg| \Delta V - c(\x)SV +
{\bf F}(\x, U^{(k - 1)}, \nabla U^{(k-1)}) \bigg|^2 d\x 
\label{Jk}
\end{equation}
for $V \in H.$ Then, we set 
\begin{equation}
U^{(k)} = {\rm argmin}_{V \in H} J^{(k)}(V).
\label{unk}
\end{equation}
See Theorem \ref{unique minimizer} for the existence and uniqueness of the minimizer of $J^{(k)}$.

Recall that the function $U^{(k)}$ is called the ``Carleman best fit" solution to \eqref{iter sys} obtained by the Carleman quasi-reversibility method.
 The original quasi-reversibility method was  introduced by Latt\`{e}s and Lions in 1969, see \cite{LattesLions:book1969}. We refer readers to \cite{LiNguyen:IPSE2019, NguyenLiKlibanov:IPI2019, LeNguyenNguyenPowell:jsc2021} for using the quasi-reversibility method to solve a linear system of PDEs and \cite{Klibanov:anm2015} for a survey of the quasi-reversibility method. In the original quasi-reversibility method, the best fit solutions to \eqref{iter sys} can be found by minimizing the least-squares functional. Here, we improve this method by imposing a Carleman weight function on the original least-squares functional.
 By ``improve", we mean that the presence of the Carleman weight function plays a crucial role in the convergence analysis in the next section.
 More precisely, we will show that the sequence $\{U^{(k)}\}_{k \geq 0}$ converges to the true solution to \eqref{main sys} as $k$ goes to $\infty$. The choice of the initial term does not matter.

We summary the procedure to solve Problem \ref{ISP} in the Algorithm.
\begin{algorithm}[!ht]
\caption{\label{alg}The procedure to solve Problem \ref{ISP}}
	\begin{algorithmic}[1]
	\State \label{step1} Choose an orthonormal basis $\left(\Psi_n\right)_{n\geq1}$ and a cut-off number N.
	\State \label{step2} Compute the Cauchy data $G$ and $Q$ on $\partial \Omega$ as in (\ref{3.8}).
	\State \label{step3} Choose an arbitrary vector valued function $U^{(0)} \in H^2(\Omega)^N.$
	\State \label{step4} By induction, we assume that we known $U^{(k-1)}$ $k\geq 1$.
	Solve the system (\ref{iter sys}) by Carleman quasi-reversibility method  for the vector valued function $U^{(k)}= \left( u_1^{(k)},\dots,u_m^{(k)} \right)^{\rm T}$.
	\State  \label{step6} \label{find source}Set the computed source function at step $k$ is 
			\[ p^{(k)}(\x) = u^{(k)}(\x,0) = \sum_{n = 1}^N u^{(k)}_n(\x)\Psi_n(0)\]
	\State \label{pcomp}Set the computed source function is 
	$ p_{\rm comp} = p^{(k)}$ for $k=\overline k$ large enough. 
\end{algorithmic}
\end{algorithm}

\section{The main theorems} \label{sec conv}



We establish two theorems in this section.
The first one proves that the sequence $\{U_N^{(k)}\}_{k \geq 0}$, constructed in Section \ref{iterative scheme} and \eqref{unk},  is well-defined. The second one guarantees the convergence of $\{U_N^{(k)}\}_{k \geq 0}$.

\begin{Theorem}
Fix $\beta = \beta_0$ and choose $\lambda > \lambda_0$, where $\beta_0$ and $\lambda_0$ are in Lemma \ref{carleman estimate 1} and Corollary \ref{carleman estimate}, such that \eqref{33} holds true.
For each $k \geq 1$, the functional $J^{(k)}$, defined in \eqref{Jk}, has a unique minimizer in $H$. 
\label{unique minimizer}
\end{Theorem}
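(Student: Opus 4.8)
The plan is to prove that $J^{(k)}$ has a unique minimizer in $H$ by the standard variational argument: show that $J^{(k)}$ is a strictly convex, coercive, lower semicontinuous functional on a closed convex subset of a Hilbert space, or more directly, reduce the minimization to a coercive bilinear form and apply the Riesz–Fréchet/Lax–Milgram representation. First I would observe that the functional $V \mapsto \Delta V - c(\x) S V$ is affine in $V$, so after writing $V = W + V_0$ for a fixed $V_0 \in H$ (which is nonempty by assumption) and $W$ ranging over the linear subspace $H_0 = \{W \in H^2(\Omega)^N : W|_{\partial\Omega} = 0,\ \partial_\nu W|_{\partial\Omega} = 0\}$, the functional $J^{(k)}$ becomes
\[
	J^{(k)}(W + V_0) = \int_\Omega e^{2\lambda b^{-\beta} r^\beta(\x)} \bigl| \Delta W - c(\x) S W + \Phi \bigr|^2 d\x
\]
where $\Phi = \Delta V_0 - c(\x) S V_0 + {\bf F}(\x, U^{(k-1)}, \nabla U^{(k-1)})$ is a fixed $L^2(\Omega)^N$ function (here I use that $U^{(k-1)} \in H^2(\Omega)^N$ has $\nabla U^{(k-1)} \in H^1 \subset L^2$ and that $F$, being $C^1$, maps bounded sets to bounded sets, or invoke the a priori bounds on $U^{(k-1)}$). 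Expanding the square gives $J^{(k)}(W+V_0) = a(W,W) - 2\ell(W) + \text{const}$, where
\[
	a(W,W) = \int_\Omega e^{2\lambda b^{-\beta} r^\beta(\x)} \bigl| \Delta W - c(\x) S W \bigr|^2 d\x
\]
is a nonnegative symmetric bilinear form on $H_0$ and $\ell$ is a bounded linear functional.

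The key step is coercivity of $a(\cdot,\cdot)$ on $H_0$ with respect to the $H^2(\Omega)^N$ norm. I would prove $a(W,W) \geq c_1 \|W\|_{H^2(\Omega)^N}^2$ for some $c_1 > 0$. Starting from $|\Delta W - cSW|^2 \geq \frac12 |\Delta W|^2 - |cSW|^2$ (elementary inequality $|x-y|^2 \ge \frac12|x|^2 - |y|^2$), I get
\[
	a(W,W) \geq \frac12 \int_\Omega e^{2\lambda b^{-\beta} r^\beta} |\Delta W|^2 d\x - \|c\|_\infty^2 |S|^2 \int_\Omega e^{2\lambda b^{-\beta} r^\beta} |W|^2 d\x.
\]
Now I apply the Carleman estimate of Corollary \ref{carleman estimate} componentwise to the first term: it bounds $\frac12 \int e^{2\lambda b^{-\beta} r^\beta}|\Delta W|^2$ below by $C\lambda^{-1}\int e^{\cdots}|D^2 W|^2 + C\lambda^3 \int e^{\cdots}|W|^2 + C\lambda \int e^{\cdots}|\nabla W|^2$ (up to a factor $1/2$). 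For $\lambda$ large enough the term $C\lambda^3 \int e^{\cdots}|W|^2$ dominates $\|c\|_\infty^2 |S|^2 \int e^{\cdots}|W|^2$, leaving a positive multiple of $\int e^{\cdots}(\lambda^{-1}|D^2W|^2 + \lambda^3|W|^2 + \lambda|\nabla W|^2)$. Since the Carleman weight $e^{2\lambda b^{-\beta} r^\beta(\x)}$ is bounded above and below by positive constants on the bounded set $\overline\Omega$ (with $r$ bounded away from $0$ and $\infty$ there), this controls $\|W\|_{H^2}^2$ from below — using that $\|W\|_{H^2}^2 \sim \|D^2 W\|_{L^2}^2 + \|\nabla W\|_{L^2}^2 + \|W\|_{L^2}^2$, which holds on $H_0$. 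Hence $a$ is coercive (the constant $c_1$ depends on $\lambda$, but $\lambda$ is fixed).

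With coercivity in hand, the rest is routine: $a(\cdot,\cdot)$ is a bounded (continuity is clear since $\|\Delta W - cSW\|_{L^2} \lesssim \|W\|_{H^2}$) coercive symmetric bilinear form on the Hilbert space $H_0$, so it defines an equivalent inner product; $\ell$ is bounded on $H_0$; by the Riesz representation theorem there is a unique $W^* \in H_0$ with $a(W^*, \cdot) = \ell(\cdot)$, and by strict convexity (coercivity implies $a(W,W) > 0$ for $W \neq 0$) this $W^*$ is the unique minimizer of $W \mapsto a(W,W) - 2\ell(W)$. Setting $U^{(k)} = W^* + V_0$ gives the unique minimizer of $J^{(k)}$ in $H$. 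I would note the minimizer is independent of the choice of $V_0$ because $J^{(k)}$ itself is intrinsically defined on $H$. The main obstacle is the coercivity estimate, and specifically making sure the absorption of the $|W|^2$ term via the $\lambda^3$ gain in the Carleman estimate is clean and that the Carleman estimate — stated for $v \in C^2(\overline\Omega)$ in Lemma \ref{carleman estimate 1} but extended to $H^2(\Omega)$ in Corollary \ref{carleman estimate} — is applied in its $H^2$ form so no density argument is needed; everything else is standard Hilbert-space minimization.
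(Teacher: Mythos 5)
Your proposal is correct and follows essentially the same route as the paper: translate by a fixed element of $H$ to work on the closed subspace $H_0$, establish equivalence of the bilinear form $a(\cdot,\cdot)$ with the $H^2(\Omega)^N$ norm via the elementary inequality $|x-y|^2\geq \tfrac12|x|^2-|y|^2$ and the Carleman estimate of Corollary \ref{carleman estimate}, and conclude by the Riesz representation theorem plus strict convexity. Your treatment of the absorption of the $\|c\|_\infty^2|S|^2\int e^{2\lambda b^{-\beta}r^\beta}|W|^2$ term by the $C\lambda^3$ gain is in fact slightly more explicit than the paper's, which passes directly from the Carleman-estimate lower bound to the coercivity inequality.
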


\begin{proof}
The proof of Theorem \ref{unique minimizer} is based on the Riesz Representation Theorem.
Since the set of admissible solutions $H$ is nonempty, we can find a vector valued function  $\Phi = (\phi_1, \dots, \phi_N)^{\rm T}$  in $H$.
Define 
\begin{equation}
H_0 =  \big\{ U - \Phi \in H^2(\Omega)^N: U \in H \big\} \
\label{H0}
\end{equation}
It is obvious that for all $V = (v_1, \dots, v_N)^{\rm T} \in H_0$, $V|_{\partial \Omega} = 0$ and $\partial_{\nu}V|_{\partial \Omega} = 0.$
Using the trace theory, we can see that $H_0$ is a closed subspace of $H^2(\Omega)^N.$

Define the bounded bilinear form $a: H_0 \times H_0 \to \R$ and the bounded linear map $L: H_0 \to \R$ as 
\begin{equation*}
a(P,Q) =  \ds\int_{\Omega} e^{2\lambda b^{-\beta} r^{\beta}(\x)} \big( \Delta P - c(\x) S P \big) \cdot \big( \Delta Q - c(\x)SQ \big) d\x
\end{equation*}
and 
\begin{equation}
LQ = - \int_{\Omega} e^{2\lambda b^{-\beta} r^{\beta}(\x)} \big( \Delta \Phi(\x) - c(\x)S\Phi 
+ {\bf F}(\x, U^{(k - 1)}, \nabla U^{(k-1)})\big) \cdot \big(
 \Delta Q(\x) - c(\x)SQ\big) d\x
 \label{LQ}
\end{equation}
for all $P, Q \in H_0$.

We will show that $a(P,Q)$ is an inner product on $H_0$ and equivalent to the standard norm in $H^2(\Omega)^N$.
In fact,
for all $P \in H_0$,
\begin{equation}
	a(P,P) = \ds\int_{\Omega} e^{2\lambda b^{-\beta} r^{\beta}(\x)} | \Delta P - c(\x)SP |^2 d\x
	\leq C \Vert P \Vert_{H^2(\Omega)^N}^2.
	\label{4,4}
\end{equation}
On the other hand, for all $P \in H_0$, using the inequality $(x - y)^2 \geq \frac{1}{2}x^2 - y^2$, we have
\begin{multline}
	a(P,P) = \ds\int_{\Omega} e^{2\lambda b^{-\beta} r^{\beta}(\x)} | \Delta P - c(\x)SP |^2 d\x \\
	\geq \frac{1}{2}\ds\int_{\Omega} e^{2\lambda b^{-\beta} r^{\beta}(\x)} | \Delta P |^2 d\x - \ds\int_{\Omega} e^{2\lambda b^{-\beta} r^{\beta}(\x)} |c(\x)SP |^2 d\x.
\end{multline}
Applying the Carleman estimate \eqref{33} for vector $P \in H_0$, we have
\begin{multline}
	a(P,P) 
	\geq C\lambda^{-1} \int_{\Omega}e^{2\lambda b^{-\beta} r^\beta(\x)} |D^2P|^2 d\x
	\\
	+ 	C \lambda^3 \int_{\Omega}  e^{2\lambda b^{-\beta} r^{\beta}}|P|^2 d\x 
	+ C \lambda \int_{\Omega} e^{2\lambda b^{-\beta} r^{\beta}(\x)} |\nabla P|^2 d\x	\\
	-  \ds\int_{\Omega} e^{2\lambda b^{-\beta} r^{\beta}(\x)} |c(\x) SP |^2 d\x.
\end{multline}
This estimate leads to
\begin{equation}
a(P,P) \geq C \Vert P \Vert_{H^2(\Omega)^N}.
\label{4,7}
\end{equation}
Hence, due to \eqref{4,4} and \eqref{4,7}, the bilinear form $a$ defines an inner product on $H_0$, denoted by $a(\cdot, \cdot)$. This new inner product is equivalent to the traditional one of $H^2(\Omega)^N$.

Recall that $L$ is a bounded linear map.
Using the Riesz Representation Theorem for $H_0$ with the inner product $a(\cdot, \cdot)$, there exists a unique vector $W_0$ such that 
\begin{equation}
a(W_0,Q) = LQ \mbox{ for all } Q \in H_0
\end{equation}
This means 
\begin{multline}
\ds\int_{\Omega} e^{2\lambda b^{-\beta} r^{\beta}(\x)} \big( \Delta W_0 - c(\x)SW_0\big)\big( \Delta Q - c(\x)SQ \big) d\x
\\
=  - \int_{\Omega} e^{2\lambda b^{-\beta} r^{\beta}(\x)} \big( \Delta \Phi - c(\x)S\Phi 
+ {\bf F}(\x, U^{(k - 1)}, \nabla U^{(k-1)})\big)\cdot \big(
 \Delta Q - c(\x)SQ\big) d\x
\end{multline}
for all $Q \in H_0.$
It implies that
\begin{multline}
\ds\int_{\Omega} e^{2\lambda b^{-\beta} r^{\beta}(\x)} \big( \Delta W_0 - c(\x)SW_0 + \Delta \Phi - c(\x)S\Phi \\
+ {\bf F}(\x, U^{(k - 1)}, \nabla U^{(k-1)})\big)\cdot \big( \Delta Q - c(\x)SQ \big) d\x = 0
\label{4.131}
\end{multline}
for all $Q \in H_0.$

Let $U_0 = W_0 + \Phi \in H$. 
It follows from \eqref{4.131} that
\begin{equation}
\ds\int_{\Omega} e^{2\lambda b^{-\beta} r^{\beta}(\x)} \big( \Delta U_0 - c(\x)SU_0
+ {\bf F}(\x, U^{(k - 1)}, \nabla U^{(k-1)})\big)\cdot \big( \Delta Q - c(\x)SQ \big) d\x = 0
\label{4.1311}
\end{equation}
for all $Q \in H_0.$
We next claim that $U_0$ is a minimizer of the functional $J^{(k)}$ defined in \eqref{Jk}.
In fact,
for all $U \in H$, define $h = U - U_0 \in H_0$, we have
\begin{align*}
J^{(k)}&(U) - J^{(k)}(U_0) = J^{(k)}(U_0+h) - J^{(k)}(U_0)  \nonumber
\\
&= \int_{\Omega} e^{2\lambda b^{-\beta} r^{\beta}(\x)} \Big| \Delta (U_0+h)- c(\x)S(U_0+h)
+ {\bf F}\left(\x, U^{(k-1)},\nabla U^{(k-1)}\right) \Big|^2 d\x \notag
\\
&\quad \quad
 - \int_{\Omega} e^{2\lambda b^{-\beta} r^{\beta}(\x)} \Big| \Delta U_0- c(\x)SU_0
+{\bf F}\left(\x, U^{(k-1)},\nabla U^{(k-1)}\right) \Big|^2 d\x
\end{align*}
Applying $(a+b)^2 = a^2 + 2ab +b^2$, we have 
\begin{multline}
J^{(k)}(U) - J^{(k)}(U_0) 
= \int_{\Omega} e^{2\lambda b^{-\beta} r^{\beta}(\x)} |\Delta h - c(\x) S h|^2 d\x
\\
+ 2 \int_{\Omega} e^{2\lambda b^{-\beta} r^{\beta}(\x)} \Big(\Delta U_0- c(\x)SU_0
+{\bf F}\big(\x, U^{(k-1)},\nabla U^{(k-1)}\big)\Big)(\Delta h - c(\x) S h) d\x
\label{4,11}
\end{multline}
It follows from \eqref{4.1311} and \eqref{4,11} that
\[
J^{(k)}(U) - J^{(k)}(U_0) = \int_{\Omega} e^{2\lambda b^{-\beta} r^{\beta}(\x)} |\Delta h - c(\x) S h|^2 d\x \geq 0
\]
for all $U \in H$. 
As a result, $U_0$ is a minimizer of $J^{(k)}$ in $H$. 
The uniqueness of the minimizer is due to the strict convexity of $J^{(k)}$ in $H$.
\end{proof}

We next rigorously prove that the sequence of vectors, $\{U^{(k)}\}_{k\geq 1}$, 
defined in \eqref{unk} in Section \ref{iterative scheme}, converges to the true solution to \ref{main sys}. We first consider the simple case when $\Vert F \Vert_{C^1(\R,\R^d)} < \infty$. The case when $\Vert F \Vert_{C^1(\R,\R^d)} = \infty$ will follow by using a truncation technique, see Remark \ref{rem4.3}.

\begin{Theorem}  
Let $\beta_0$  be as in Lemma \ref{carleman estimate 1}. 
Fix $\beta = \beta_0$ and let $\lambda_0$ be the number as in Lemma \ref{carleman estimate 1} depending only on $\x_0,$ $\Omega,$ $d$ and $\beta$.
For all $\lambda \geq \lambda_0$,
define the sequence $\{U^{(k)}\}_{k \geq 0}$ as in \eqref{unk} in Section \ref{iterative scheme} where $U^{(0)}$ is an arbitrary function in $H^2(\Omega)^N$.
Assume that $\Vert F \Vert_{C^1(\R,\R^d)} < \infty$. Assume further that the system (\ref{main sys}) has a unique solution $U^*$. 
Then, we have
\begin{multline}
 \int_{\Omega}  e^{2\lambda b^{-\beta} r^\beta(\x)} \Big( \lambda^{-2}|D^2 ( U^{(k)} - U^*)|^2   + |\nabla (U^{(k)} - U^*)|^2 +  | U^{(k)} - U^*|^2 \Big) d\x \\
	\leq
	\Big(\frac{C}{\lambda}\Big)^k \int_{\Omega} e^{2\lambda b^{-\beta}r^{\beta}(\x)} \Big(\lambda^{-2}|D^2 ( U^{(0)} - U^*)|^2  + \big| \nabla (U^{(0)}-U^* \big) \big|^2 + \big| U^{(0)} - U^* \big|^2  \Big),
\label{main est}
\end{multline}
for $k=1, 2, \dots$ where $C$ is a constant depending only on $\x_0,$ $\Omega,$ $d$, and $\Vert F \Vert_{C^1(\R,\R^d)}$.
\label{main theorem}
\end{Theorem}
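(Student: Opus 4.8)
The plan is to imitate the proof of the Banach fixed point theorem, with the Carleman estimate of Corollary \ref{carleman estimate} playing the role that the Lipschitz constant plays in the classical argument. First I would record the Euler--Lagrange identity satisfied by the iterates: exactly as in the proof of Theorem \ref{unique minimizer} (identity \eqref{4.1311}, with $U_0$ there replaced by $U^{(k)}$), the minimizer $U^{(k)}$ of $J^{(k)}$ satisfies
\[
\int_{\Omega} e^{2\lambda b^{-\beta} r^{\beta}(\x)}\bigl(\Delta U^{(k)} - c(\x)SU^{(k)} + {\bf F}(\x, U^{(k-1)}, \nabla U^{(k-1)})\bigr)\cdot\bigl(\Delta Q - c(\x)SQ\bigr)\,d\x = 0
\]
for all $Q \in H_0$. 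Since the assumed exact solution $U^*$ of \eqref{main sys} belongs to $H$ and satisfies $\Delta U^* - c S U^* + {\bf F}(\x, U^*, \nabla U^*) \equiv 0$ in $\Omega$, the difference $W^{(k)} := U^{(k)} - U^*$ lies in $H_0$ (defined in \eqref{H0}) and, after subtracting, obeys the same identity with $\Delta U^{(k)}-c(\x)SU^{(k)}$ replaced by $\Delta W^{(k)}-c(\x)SW^{(k)}$ and the ${\bf F}$-term replaced by ${\bf F}(\x, U^{(k-1)}, \nabla U^{(k-1)}) - {\bf F}(\x, U^*, \nabla U^*)$. Testing with $Q = W^{(k)}\in H_0$ and applying Cauchy--Schwarz to the cross term gives
\[
\int_{\Omega} e^{2\lambda b^{-\beta} r^{\beta}}\bigl|\Delta W^{(k)} - c(\x)SW^{(k)}\bigr|^2\,d\x \;\le\; \int_{\Omega} e^{2\lambda b^{-\beta} r^{\beta}}\bigl|{\bf F}(\x, U^{(k-1)}, \nabla U^{(k-1)}) - {\bf F}(\x, U^*, \nabla U^*)\bigr|^2\,d\x .
\]

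Next I would estimate the nonlinear term. Since $\Vert F\Vert_{C^1(\R,\R^d)}<\infty$, $F$ is globally Lipschitz in its last two arguments, and because $F_m(\x,V,\nabla V)$ is the pairing in $L^2(0,T)$ of $F\bigl(\x,t,\sum_n v_n\Psi_n,\sum_n \nabla v_n\Psi_n\bigr)$ with $\Psi_m$, Cauchy--Schwarz in $t$ together with $\Vert\Psi_n\Vert_{L^2(0,T)}=1$ produce a pointwise bound $|{\bf F}(\x,U,\nabla U)-{\bf F}(\x,V,\nabla V)|^2 \le C\bigl(|U-V|^2+|\nabla U-\nabla V|^2\bigr)$, with $C$ depending only on $\Vert F\Vert_{C^1(\R,\R^d)}$, $N$ and $T$. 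Thus the right-hand side above is at most $C\int_\Omega e^{2\lambda b^{-\beta}r^\beta}\bigl(|W^{(k-1)}|^2+|\nabla W^{(k-1)}|^2\bigr)\,d\x$. On the left, the elementary inequality $|a|^2\le 2|a-b|^2+2|b|^2$ together with the boundedness of $c$ on $\overline\Omega$ yields $\int e^{2\lambda b^{-\beta}r^\beta}|\Delta W^{(k)}|^2 \le 2\int e^{2\lambda b^{-\beta}r^\beta}|\Delta W^{(k)}-c(\x)SW^{(k)}|^2 + C\int e^{2\lambda b^{-\beta}r^\beta}|W^{(k)}|^2$, into which I would insert the Carleman estimate \eqref{33} applied to $W^{(k)}\in H_0$ (which vanishes together with its normal derivative on $\partial\Omega$).

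Collecting these estimates, for $\lambda$ sufficiently large (enlarging $\lambda_0$ if necessary) the stray term $C\int e^{2\lambda b^{-\beta}r^\beta}|W^{(k)}|^2$ is absorbed by the $C\lambda^3\int e^{2\lambda b^{-\beta}r^\beta}|W^{(k)}|^2$ coming from \eqref{33}; using $\lambda\ge 1$, the surviving left-hand side dominates $C\lambda\,E_k$, where $E_k:=\int_\Omega e^{2\lambda b^{-\beta}r^\beta}\bigl(\lambda^{-2}|D^2W^{(k)}|^2+|\nabla W^{(k)}|^2+|W^{(k)}|^2\bigr)\,d\x$, while the right-hand side is at most $C\,E_{k-1}$. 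This gives the recursion $E_k\le (C/\lambda)\,E_{k-1}$, and iterating down to $k=0$ is precisely \eqref{main est}. The only step that is not pure bookkeeping is the Lipschitz estimate for ${\bf F}$: one must verify that the finite-dimensional reduction inherits a global Lipschitz constant in $(U,\nabla U)$, uniform in $\x$ and accounting for the $\nabla u$-dependence of $F$, from the $C^1$ bound on $F$. Everything else amounts to tracking powers of $\lambda$; the decisive structural fact is that \eqref{33} gains a full power of $\lambda$ in each of its three terms whereas the nonlinearity costs only an absolute constant, which is exactly what makes $C/\lambda<1$ for $\lambda$ large and drives the geometric convergence.
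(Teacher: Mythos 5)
Your proposal is correct and follows essentially the same route as the paper's proof: the variational identity for the minimizer $U^{(k)}$, subtraction of the corresponding identity for $U^*$, testing with $h=U^{(k)}-U^*$, the Cauchy--Schwarz and Lipschitz bounds on ${\bf F}$, the Carleman estimate \eqref{33} to recover the full $H^2$-weighted norm, and division by $\lambda$ followed by induction. The only cosmetic differences are your use of $|a|^2\le 2|a-b|^2+2|b|^2$ in place of the paper's $(a-b)^2\ge \tfrac12 a^2-b^2$ and your more explicit remark about the finite-dimensional reduction inheriting the Lipschitz constant, which the paper asserts without elaboration.
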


\begin{proof}
In this proof, $C$ denotes different positive constants that might change from estimate to estimate.
Define 
\begin{equation}
H_0 = \left\lbrace V \in H^2(\Omega)^N: 
\right.
V|_{\partial \Omega} = 0 \mbox{ and }
\left. \partial_{\nu}V|_{\partial \Omega} = 0 \right\rbrace.
\end{equation}
It is obvious $H_0$ a closed subspace of $H^2(\Omega)^N$.
Since $U^{(k)}$ is the minimizer of $J^{(k)}$ in $H$, by the variational principle, the following identity holds
\begin{multline}
\left\langle e^{\lambda b^{-\beta} r^{\beta}(\x)} \bigg[ \Delta U^{(k)} - c(\x)SU^{(k)}
+ {\bf F}\left(\x,U^{(k-1)},\nabla U^{(k-1)}\right)\bigg], \right.\\
\left. 
 e^{\lambda b^{-\beta} r^{\beta}(\x)} \bigg[\Delta h(\x) - c(\x)Sh \bigg]
 \right\rangle_{L^2(\Omega)^N} = 0
 \label{4.2}
\end{multline}
for all $h \in H_0$.
On the other hand, since $U^*$ is the solution to  system \eqref{main sys},
\begin{equation}
\left\langle e^{\lambda b^{-\beta} r^{\beta}(\x)} \bigg[ \Delta U^{*} - c(\x)SU^{*}
+ {\bf F}\left(\x,U^{*},\nabla U^{*}\right)\bigg], \right.
\left. 
 e^{\lambda b^{-\beta} r^{\beta}(\x)} \bigg[\Delta h - c(\x)Sh \bigg]
 \right\rangle_{L^2(\Omega)^N} = 0
 \label{4.3}
\end{equation}
for all $h \in H_0$.
Subtracting (\ref{4.3}) from (\ref{4.2}), we obtain
\begin{multline}
\Big\langle e^{\lambda b^{-\beta} r^{\beta}(\x)} \big[ \Delta (U^{(k)}-U^{*})
- c(\x)S(U^{(k)}-U^{*}) 
+ {\bf F}(\x,U^{(k-1)},\nabla U^{(k-1)}) 
\\
- {\bf F}(\x,U^{*},\nabla U^{*}) \big],
e^{\lambda b^{-\beta} r^{\beta}(\x)} \big[\Delta h - c(\x)Sh \big]
 \Big\rangle_{L^2(\Omega)^N} = 0
 \label{4.4}
\end{multline}
for all $h \in H_0$.
Using the test function $h =U^{(k)} - U^{*} \in H_0$ and  H\"older's inequality, we obtain from identity (\ref{4.4}) that
\begin{multline}
	\int_{\Omega}
e^{2\lambda b^{-\beta}r^{\beta}(\x)} |\Delta h - c(\x) S h|^2 d\x
\\
\leq
\Big[\int_{\Omega}
e^{2\lambda b^{-\beta}r^{\beta}(\x)}
|{\bf F}(\x,U^{(k-1)},\nabla U^{(k-1)}) 
- {\bf F}(\x,U^{*},\nabla U^{*})|^2d\x\Big]^{1/2}
\\
\times \Big[\int_{\Omega}
e^{2\lambda b^{-\beta}r^{\beta}(\x)} |\Delta h - c(\x) S h|^2 d\x\Big]^{1/2},
\label{4.5}
\end{multline}
which implies
\begin{equation}
	\int_{\Omega}
e^{2\lambda b^{-\beta}r^{\beta}(\x)} |\Delta h - c(\x) S h|^2 d\x
\leq
\int_{\Omega}
e^{2\lambda b^{-\beta}r^{\beta}(\x)}
|{\bf F}(\x,U^{(k-1)},\nabla U^{(k-1)}) 
- {\bf F}(\x,U^{*},\nabla U^{*})|^2d\x.
\label{4.7}
\end{equation}
Since $F$ has a finite $C^1$ norm, so does ${\bf F}$. Using the inequality $(a+b)^2 \leq 2(a^2 + b^2)$, we obtain from \eqref{4.7} that
\begin{equation}
	\int_{\Omega}
e^{2\lambda b^{-\beta}r^{\beta}(\x)} |\Delta h - c(\x) S h|^2 d\x
\leq
C\int_{\Omega}
e^{2\lambda b^{-\beta}r^{\beta}(\x)}
[|U^{(k - 1)} - U^*|^2 + |\nabla (U^{(k - 1)} - U^*)|^2]d\x.
\label{4.8}
\end{equation}
On the other hand, applying the inequality $(a-b)^2 \geq \frac{1}{2}a^2 - b^2$, we get
\begin{equation}
\int_{\Omega} e^{2\lambda b^{-\beta}r^{\beta}(\x)} |\Delta h - c(\x) S h|^2 d\x \geq \frac{1}{2}\int_{\Omega} e^{2\lambda b^{-\beta}r^{\beta}(\x)} \left| \Delta h \right|^2 d\x
- \int_{\Omega} e^{2\lambda b^{-\beta}r^{\beta}(\x)} \left| c(\x)Sh\right|^2 d\x.
\label{4.12}
\end{equation}
Using the Carleman estimate \eqref{33} for the function $h \in H_0$, we have
\begin{multline}
	\int_{\Omega} e^{2\lambda b^{-\beta} r^{\beta}(\x)}|\Delta h|^2 d\x
	\geq 
	C\lambda^{-1} \int_{\Omega}e^{2\lambda b^{-\beta} r^\beta(\x)} |D^2h|^2 d\x
	\\
	+ 	C \lambda^3 \int_{\Omega}  e^{2\lambda b^{-\beta} r^{\beta}}|h|^2 d\x
	+ C \lambda \int_{\Omega} e^{2\lambda b^{-\beta} r^{\beta}(\x)} |\nabla h |^2 d\x.
	\label{4.13}
\end{multline}
Combining \eqref{4.8}, \eqref{4.12} and \eqref{4.13}, we obtain
\begin{multline}
\lambda^{-1} \int_{\Omega}e^{2\lambda b^{-\beta} r^\beta(\x)} |D^2h|^2 d\x
	+ 	\lambda \int_{\Omega}  e^{2\lambda b^{-\beta} r^{\beta}}\Big( |h|^2 + |\nabla h|^2 \Big) d\x \\
	\leq
	C \int_{\Omega} e^{2\lambda b^{-\beta}r^{\beta}(\x)} \left( \big| U^{(k-1)} - U^* \big|^2 + \big| \nabla\left(U^{(k-1)}-U^* \right) \big|^2 \right).
\label{4.17}
\end{multline}
Multiply both sides of \eqref{4.17} with $\ds\frac{1}{\lambda}$ and recall $h = U^{(k)} - U^*$. We have
\begin{multline}
 \int_{\Omega}  e^{2\lambda b^{-\beta} r^\beta(\x)} \Big( \lambda^{-2}|D^2 ( U^{(k)} - U^*)|^2   + |\nabla (U^{(k)} - U^*)|^2 +  | U^{(k)} - U^*|^2 \Big) d\x \\
	\leq
	\frac{C}{\lambda} \int_{\Omega} e^{2\lambda b^{-\beta}r^{\beta}(\x)} \Big(\big| \nabla (U^{(k-1)}-U^* \big) \big|^2 + \big| U^{(k-1)} - U^* \big|^2  \Big),
\label{4.18}
\end{multline}
which implies
\begin{multline}
 \int_{\Omega}  e^{2\lambda b^{-\beta} r^\beta(\x)} \Big( \lambda^{-2}|D^2 ( U^{(k)} - U^*)|^2   + |\nabla (U^{(k)} - U^*)|^2 +  | U^{(k)} - U^*|^2 \Big) d\x \\
	\leq
	\frac{C}{\lambda} \int_{\Omega} e^{2\lambda b^{-\beta}r^{\beta}(\x)} \Big(\lambda^{-2}|D^2 ( U^{(k-1)} - U^*)|^2  + \big| \nabla (U^{(k-1)}-U^* \big) \big|^2 + \big| U^{(k-1)} - U^* \big|^2  \Big)
\label{4.19}
\end{multline}
By induction, from \eqref{4.19}, we have \eqref{main est}.
\end{proof}
\begin{Corollary}
Fix $\lambda$ large enough such that $\theta = C/\lambda \in (0, 1)$ where $C$ is the constant in \eqref{main est}. Then, 
\begin{multline}
\min_{\x \in \overline \Omega}\big\{e^{2\lambda b^{-\beta} r^\beta(\x)}\big\} \int_{\Omega}   \Big( \lambda^{-2}|D^2 ( U^{(k)} - U^*)|^2   + |\nabla (U^{(k)} - U^*)|^2 +  | U^{(k)} - U^*|^2 \Big) d\x \\
	\leq
	\theta^k  \max_{\x \in \overline \Omega}\big\{e^{2\lambda b^{-\beta} r^\beta(\x)}\big\} \int_{\Omega}  \Big(\lambda^{-2}|D^2 ( U^{(0)} - U^*)|^2  + \big| \nabla (U^{(0)}-U^* \big) \big|^2 + \big| U^{(0)} - U^* \big|^2  \Big)
\label{main est1}
\end{multline}
Inequality \eqref{main est1} rigorously guarantees that the sequence $\{U^{(k)}\}_{k\geq 1}$ converges to $U^*$ in $H^2(\Omega)^N$. As a result, the sequence $\big\{p^{(k)}(\x)\big\}_{k\geq 1}$ obtained in Step \ref{find source} of  Algorithm \ref{alg} converges to the true source function $p^*(\x)$ in $H^{2}(\Omega)^N$ where
\[
	p^*(\x) = \sum_{n = 1}^N u_n^*(\x)\Psi_n(0) 
	\quad \mbox{for all } \x \in \Omega.
\]
\end{Corollary}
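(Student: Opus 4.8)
The plan is to deduce \eqref{main est1} from \eqref{main est} by a crude bound on the Carleman weight, and then to reinterpret the resulting weighted estimate as convergence in the ordinary $H^2$ norm. First I would note that $r(\x) = |\x - \x_0|$ is continuous on the compact set $\overline{\Omega}$ and, by the hypothesis of Lemma \ref{carleman estimate 1}, satisfies $r(\x) > 1$ there; hence the weight $e^{2\lambda b^{-\beta} r^{\beta}(\x)}$ attains a strictly positive minimum and a finite maximum on $\overline{\Omega}$. Bounding the weight on the left-hand side of \eqref{main est} from below by its minimum, bounding it on the right-hand side from above by its maximum, and abbreviating $(C/\lambda)^k = \theta^k$ with $\theta = C/\lambda$, one obtains \eqref{main est1} at once. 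This is legitimate because the constant $C$ in \eqref{main est} depends only on $\x_0$, $\Omega$, $d$ and $\|F\|_{C^1(\R,\R^d)}$, so $\lambda$ may be taken large enough that $\theta \in (0,1)$.

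Next I would divide \eqref{main est1} through by the positive constant $\min_{\x \in \overline{\Omega}} e^{2\lambda b^{-\beta} r^{\beta}(\x)}$. With $\lambda$ now fixed, the factor $\lambda^{-2}$ in front of $|D^2(\cdot)|^2$ is a harmless positive constant, so the left-hand integrand is comparable, up to constants depending only on $\lambda$, to $|D^2 V|^2 + |\nabla V|^2 + |V|^2$, i.e.\ to the integrand of $\|V\|_{H^2(\Omega)^N}^2$. This yields
\[
\|U^{(k)} - U^*\|_{H^2(\Omega)^N}^2 \leq C_\lambda\, \theta^k\, \|U^{(0)} - U^*\|_{H^2(\Omega)^N}^2
\]
with $C_\lambda$ independent of $k$. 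Since $\theta \in (0,1)$, the right-hand side tends to $0$, so $U^{(k)} \to U^*$ in $H^2(\Omega)^N$ with geometric rate $\theta^{k/2}$.

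Finally, for the source functions, from Step \ref{find source} of Algorithm \ref{alg} we have $p^{(k)}(\x) = \sum_{n=1}^N u_n^{(k)}(\x)\Psi_n(0)$ and, correspondingly, $p^*(\x) = \sum_{n=1}^N u_n^*(\x)\Psi_n(0)$; thus $p^{(k)} - p^*$ is a fixed finite linear combination (with coefficients $\Psi_n(0)$) of the components of $U^{(k)} - U^*$. By the triangle inequality and Cauchy--Schwarz,
\[
\|p^{(k)} - p^*\|_{H^2(\Omega)} \leq \Big(\sum_{n=1}^N |\Psi_n(0)|^2\Big)^{1/2}\, \|U^{(k)} - U^*\|_{H^2(\Omega)^N},
\]
and the right-hand side goes to $0$ by the previous paragraph, giving the claimed convergence of $\{p^{(k)}\}$ to $p^*$.

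This corollary is essentially bookkeeping on top of Theorem \ref{main theorem}; the only step that requires a moment of care is the passage from the weighted integrals to the unweighted $H^2$ norm, which relies precisely on $\lambda$ being held fixed (so that $\lambda^{-2}$ is a constant) and on the Carleman weight being pinched between two positive constants on $\overline{\Omega}$ — the latter being where the geometric condition $r(\x) > 1$ from Lemma \ref{carleman estimate 1} enters. I do not anticipate any genuine obstacle beyond this.
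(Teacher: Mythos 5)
Your proposal is correct and follows exactly the route the paper intends: the corollary is stated without a separate proof precisely because it is the immediate consequence of \eqref{main est} obtained by pinching the Carleman weight between its (strictly positive) minimum and its (finite) maximum on $\overline{\Omega}$ and setting $\theta = C/\lambda$, after which the unweighted $H^2$ convergence and the convergence of $p^{(k)} = \sum_{n=1}^N u_n^{(k)}\Psi_n(0)$ follow from dividing by the constant minimum and taking a finite linear combination, just as you do.
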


\begin{Remark}
	One of our contributions in this paper is that we rigorously prove the convergence of the iterative scheme to the true solution of the nonlinear PDEs in $H^2(\Omega)^N$. It is an important improvement in comparison with the main theorem in \cite{NNT:arxiv2022, LocNguyen:arXiv2022} which prove the convergence in $H^1$ only.
\end{Remark}

\begin{Remark}
With the term $\theta \in (0,1)$, estimate \eqref{main est1} is similar to the one in the contraction mapping principle. This explains the title of the paper.
\end{Remark}

\begin{Remark}
In  Theorem \ref{main theorem}, we assume that the function $F$ has a finite $C^1$ norm. However,
in reality, $\|F\|_{C^1}$ might be infinity.
We can extend Theorem \ref{main theorem} for the case when $\Vert F \Vert_{C^1} = \infty$. 
Recall from the statement of Problem \ref{ISP} that  $\|u^*\|_{C^1(\overline{\Omega_T})} < M$ for some known large number $M$. Combining with (\ref{approx un}) and noting that $\|\Psi_n\|_{L^2(0, T)} = 1$, we have $\| u_m^*(\x) \|_{C^1(\overline \Omega)} \leq M\sqrt{T}$.
Define the smooth function $\chi \in C^{\infty}(\R\times\R^d)$ as follows
\begin{equation}
\chi(s,\mathbf{p}) = \left\{
\begin{array}{ll}
1  &\mbox{ if } |s| + |\mathbf{p}| < M\sqrt{T}, \\
\in (0,1)  &\mbox{ if } M\sqrt{T} \leq |s| + |\mathbf{p}| \leq 2M\sqrt{T},\\
0  &\mbox{ if } |s| + |\mathbf{p}| > 2M\sqrt{T}.
\end{array}
\right.
\end{equation}
We then set $\widetilde{\bf F} = \chi {\bf F}$. Since $|u_m^*| + |\nabla u_m^*|  < M$, the vector $U^*$ solves the following problem
\begin{equation}
	\left\{
		\begin{array}{ll}
			\Delta U(\x) - c(\x) SU(\x) + \widetilde{\bf F}(\x, U(\x), \nabla U(\x)) = 0& \x \in \Omega,
			\\
			U(\x) = G(\x) &\x \in \partial\Omega,\\
		\partial_{\nu} U(\x) = Q(\x) &\x \in \partial\Omega.
	\end{array}
	\right.
	\label{main sys modified}
\end{equation}

Thus, we can apply Algorithm \ref{alg} for (\ref{main sys modified}) to compute $U^*(\x)$ and the source function $p(\x)$.
\label{rem4.3}
\end{Remark}

\section{Numerical study}
\label{sec numerical study} 
In this section, we perform some numerical results obtained by Algorithm \ref{alg}. For simplicity purpose, we test our method in the 2-D case, i.e. d=2. 

\subsection{The forward problem}
In order to generated computationally simulated data (\ref{data}), we need to solve numerically the forward problem. Let $\widetilde{R} > R >0$ be two positive numbers. We define the domains 
\[ \widetilde{\Omega} = (-\widetilde{R},\widetilde{R})^2 \quad \mbox{ and } \quad \Omega = (-R,R)^2. \]
We first solving the following problem defined in the bigger domain $\widetilde{\Omega} \times (0,T)$
\begin{equation}
	\left\{
		\begin{array}{rcll}
			c(\x)u_t(\x, t) &=& \Delta u(\x, t) + F\left(\x,t,u(\x,t),\nabla u(\x,t)\right) &\x \in \widetilde{\Omega}, t \in (0, T)\\
			u(\x,0) &=& p(\x) & \x \in \widetilde{\Omega}, \\
			u(\x,t) &=& 0   & \x \in \partial\widetilde{\Omega}, t \in [\Finv0,T].
	\end{array}
	\right.
	\label{fw}
\end{equation}
Let $\x=(x,y) \in \widetilde{\Omega}$. In our numerical tests, the known coefficient function $c(x,y)$ is set as
\begin{equation}
c(x,y) = 1 + \frac{1}{50} \left( 3(1-x)^2 e^{-x^2-(y+1)^2}\right.
\left.
-10 \left( \frac{x}{5}-x^3-y^5 \right) e^{-x^2-y^2}-\frac{1}{3}e^{-(x+1)^2-y^2} \right).
\end{equation}
This function is a scale of the "peaks" function in Matlab. The values of coefficient function $c(x,y)$ vary in the range of $[0.8693,1.1618]$.
We solve (\ref{fw}) by the finite difference method using the explicit scheme. 
Afterward, we extract the data on the boundary of the domain $\Omega$ for the simulated data: 
\[g(\x,t) = u(\x,t) \mbox{ and } q(\x,t) = \partial_\nu u(\x,t) \mbox{ for } \x \in \partial\Omega, t \in [0,T]\]

\subsection{Implementation}
\label{sec implementation}
On the domain $\overline{\Omega}$, we arrange an $N_\x \times N_\x$ uniform grid
\[ \mathcal{G} = \Big\lbrace (x_i,y_j): x_i = -R + (i-1)h, y_j = -R + (j-1)h, 1 \leq i,j \leq N_\x \Big\rbrace \] 
where $N_\x$ is the number of grid points, $h = \ds\frac{2R}{N_\x-1}$ is the mesh spacing.
\begin{Remark}
In our computations, we set $R_1 = 6$, $R = 1$, $T = 1.5, N_\x = 80$.
\end{Remark} 

In the following, we present the implementation of Algorithm \ref{alg} to solve Problem \ref{ISP} .

{\bf Step \ref{step1}.} We employ the orthonormal basis $\{\Psi_n\}_{n\geq1}$ as in (\ref{sec basis}). The cut-off number $N$ in our approximation of $u(\x,t)$ in (\ref{approx u}) is chosen as follows. We first choose a test (Test 1 in Section \ref{sec numerical results}) as a reference test. Then, for each $N \geq 1$, we compute the error function
\begin{equation}
e_N(\x) =\Big| u^*(\x,0) - \sum_{n=1}^N u_n^*(\x) \Psi_n(0) \Big| \quad \x \in \Omega
\label{error N}
\end{equation}
and choose $N$ such that $\|e_N\|_{L^{\infty}(\Omega)}$ is small enough. 
Figure \ref{fig choose N} presents the values of the error $e_N$ when $N = 15, 35$ and $40$. It is obvious that increasing the value of $N$ reduces the error. With $N = 40$, the error is small enough. We, therefore, choose $N = 40$ for all numerical tests.
\begin{figure}[!ht]
	\begin{center}
		\subfloat[$N = 15$]{\includegraphics[width=0.3\textwidth]{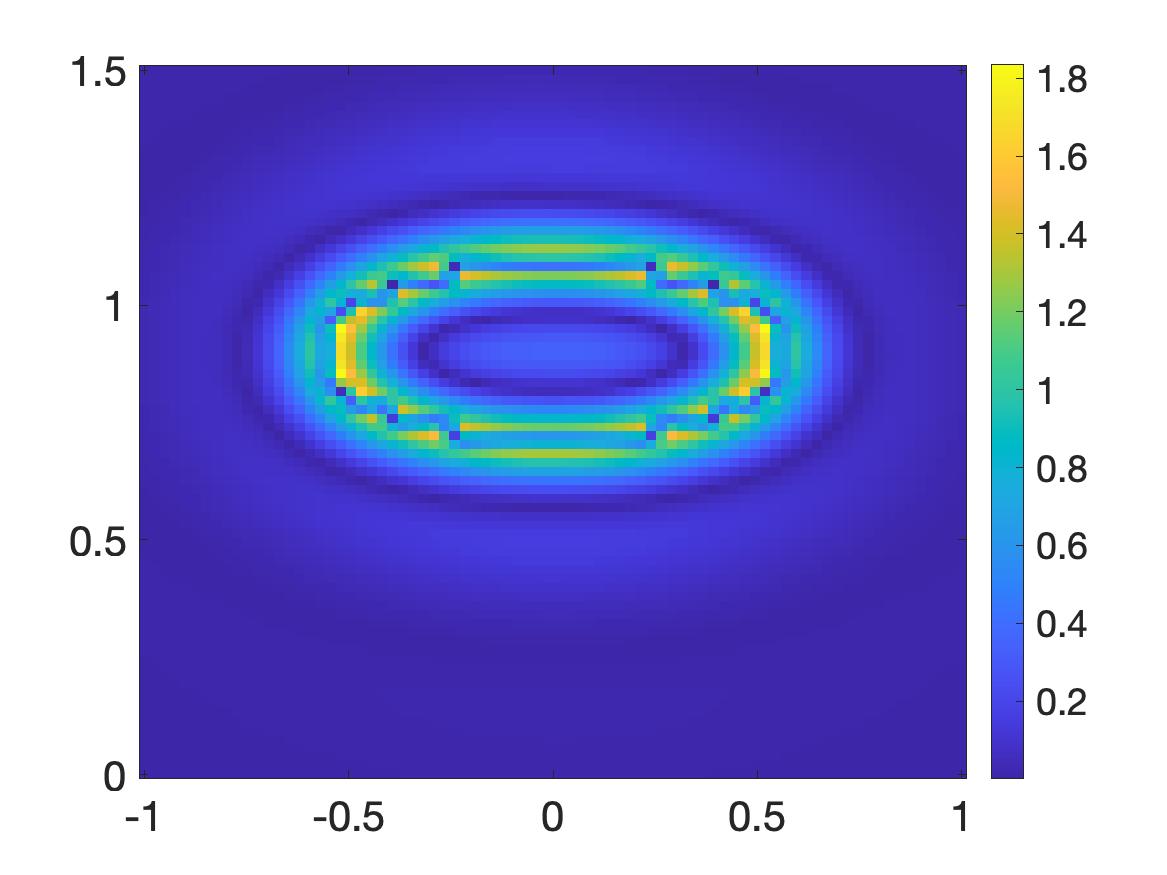}} \hfill
		\subfloat[$N = 35$]{\includegraphics[width=0.3\textwidth]{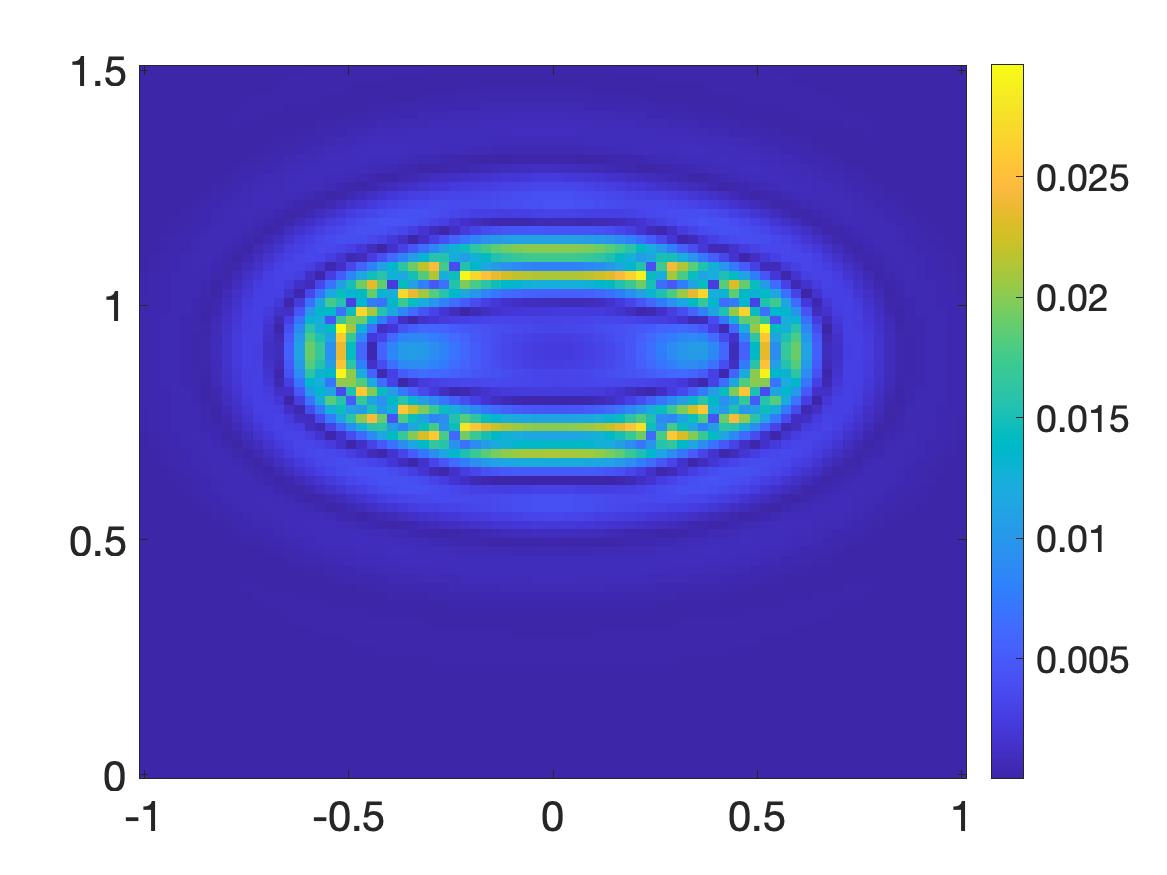}} \hfill
		\subfloat[$N = 40$]{\includegraphics[width=0.3\textwidth]{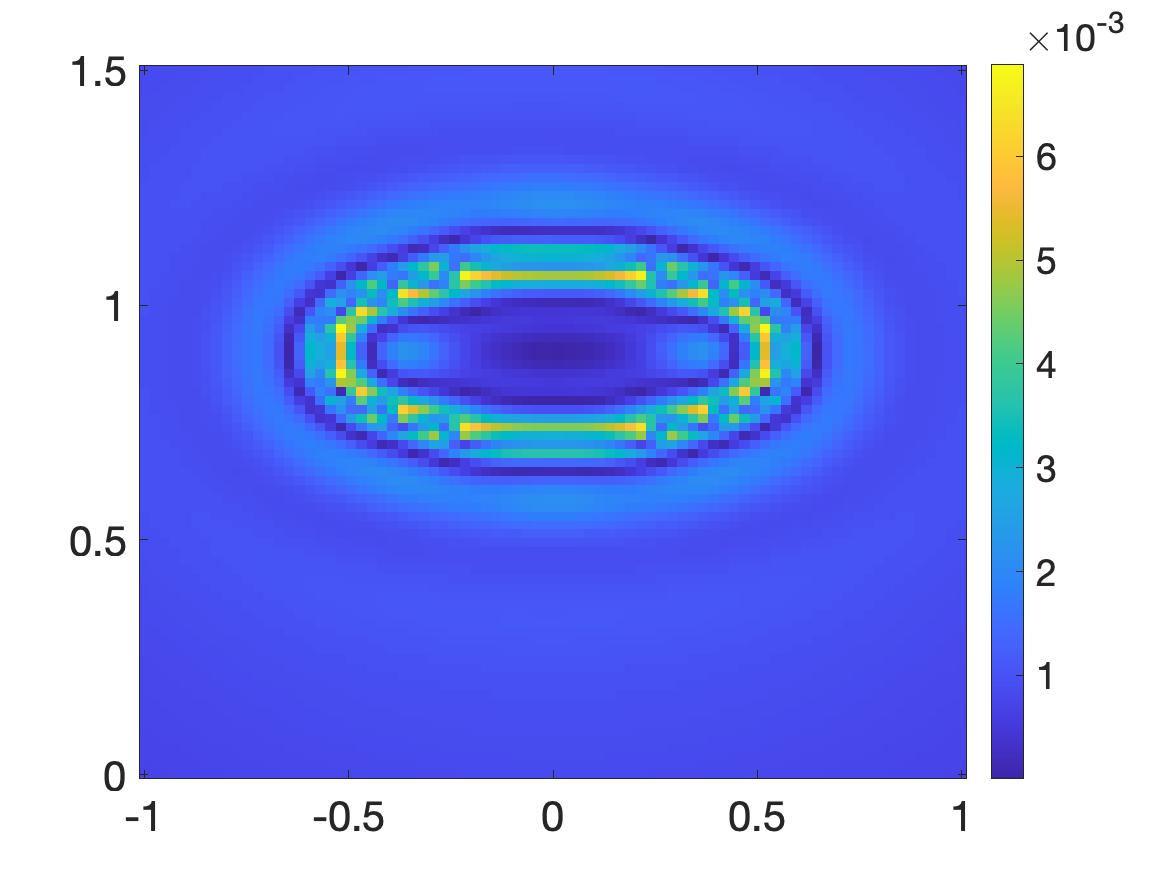}}
		\caption{\label{fig choose N} 
		The function $e_N$ defined in (\ref{error N}) where $u^*(\x,0) = p(\x)$ is the source given in Test 1. It is evident that the larger  $N$, the better approximation in (\ref{approx u}).  
}
	\end{center}
\end{figure}
\begin{Remark}
The basis $\{\Psi_n\}_{n\geq1}$ in \eqref{sec basis} was successfully used frequently in our research group. The reason for us to employ this basis rather than the well-known traditional ``sine and cosine" basis of the  Fourier series is that the first elements of the latter basis is a constant. Thus, $u_1(\x)\Psi_1'(t)$ vanishes. Hence, we might lose some information of $u_1(\x)$ in when plugging \eqref{approx ut} into \eqref{main eqn}.
This leads to a significant error in computation because the first term in the series $\sum_{n = 1}^{\infty} u_n(\x)\Psi_n(t)$ is the most important one. 
%
\end{Remark}
\textbf{Step \ref{step2}.} We compute the ``indirect data" $G(\x)$ and $Q(\x)$ on $\partial\Omega$ as in (\ref{3.8}). Denote $G^*(\x)$ and $Q^*(\x)$ as the noiseless data. The noisy data is set as follows, for $\x \in \partial\Omega$
\[ G^{\delta}(\x) = G^*(\x)\left( 1 + \delta {\rm rand} \right), \quad 
Q^{\delta}(\x) = Q^*(\x)\left( 1 + \delta{\rm rand} \right) 
\]
for all $\x \in \partial \Omega,$
where $\delta > 0$ represents for the noise level and ``rand'' is the function taking uniformly distributed random numbers in the range $[-1,1]$. In our numerical tests, a significant noise with noise level $\delta = 20\%$ is involved in the data.

\textbf{Step \ref{step3}.} For simplicity, we choose the initial solution $U^{(0)} = \overrightarrow{0}$.

\textbf{Step \ref{step4}.} Given $U^{(k-1)}$ for $k\geq 1$,
we minimize the functional $J^{(k)}$, defined in \eqref{Jk}, by the optimization package of MATLAB with the command ``lsqlin". The implementation for the quasi-reversibility method to minimize similar functionals was described in \cite[\S 5.3]{LeNguyen:jiip2020} and in \cite[\S 5]{LeNguyenNguyenPowell:jsc2021}. We do not repeat this process here.
We then set $U^{(k)} = {\rm argmin}_{V \in H} J^{(k)}(V).$

The scripts for other steps of Algorithm \ref{alg} can be written easily.

\begin{Remark}
We choose $\lambda, \beta$ in the Carleman Weight Function defined in \ref{carleman estimate} by a trial-error process that is similar to the one in \cite{LeNguyen:jiip2020}. Just as in \cite{LeNguyen:jiip2020}, we choose a reference numerical test in which we know the true solution (Test 1 in section \ref{sec numerical results}). We then tried several values of $\lambda, \beta$ until obtained a satisfactory numerical result for that reference test. Next, we have used the same values of these parameters for all other tests. In our computation, $\lambda = 40, \beta=20$.

\end{Remark}

\subsection{Numerical results}
\label{sec numerical results}
In this section, we present three (3) numerical tests to verify the efficiency of Algorithm \ref{alg}. The noise level of the data in these tests is 20\%.

\noindent{ \bf{Test 1.}} In this test, the source function is defined as follows
\[
	p_{\rm true} = \left\{
		\begin{array}{ll}
			8 & 0.2x^2 + (y - 0.2)^2 < 0.25^2,\\
			0 & \mbox{otherwise.}
		\end{array}
	\right.
\]

The function F has the form of 
\[ F(s,\mathbf{p}) =  s + \sqrt{p}\]
The true and the reconstructed source function, $p^*$ and $p_{comp}$ respectively, are displayed in Figure \protect\ref{example 1}. The true source includes an elliptic placed at point $(x,y) = (0,0.2)$ with contrast 8, see \protect\ref{m1 p true}. It is evident that our numerical method can successfully reconstruct the source function with a fast convergence. In more details,   Figure \protect\ref{m1 p comp} shows the computed source function which clearly indicates the position of the ellipse. The maximal value of the inclusion is 8.5022 (relative error 6.28\%). Although the contrast is high, our method provides a good approximation of the true source without requesting a good initial solution. Besides,  Figure \protect\ref{m1 error} illustrates the consecutive relative errors which are computed as follows
\[ err_k = \ds\frac{\Vert p^{(k)} - p^{(k - 1)}\Vert_{L^{\infty}(\Omega)}}{\Vert p^{(k)}\Vert_{L^{\infty}(\Omega)}} \quad \mbox{ for } k =1,2,\dots,8 \] 
It is noticeable that our numerical method converges rapidly. The consecutive errors go to zero quickly. Actually, after only five (5) iterations, we can obtain a stable solution for the source function. This fact verifies numerically the rate of convergence estimate $O(\theta^n)$ as $n\to \infty$ in \eqref{main est} and \eqref{main est1}.
 
\begin{figure}[!ht]
	\begin{center}
		\subfloat[\label{m1 p true}]{\includegraphics[width = 0.3\textwidth]{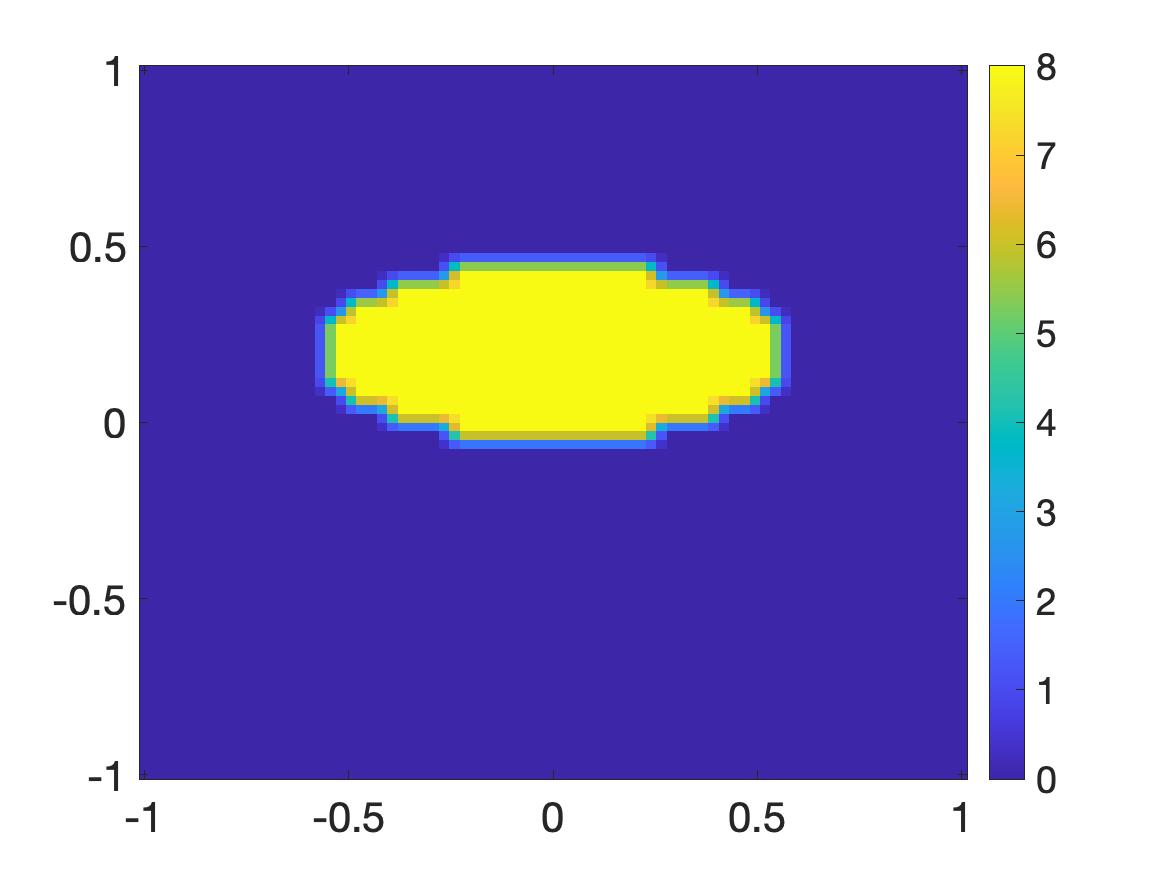}} \quad 
		\quad
		\subfloat[\label{m1 p comp}]{\includegraphics[width = 0.3\textwidth]{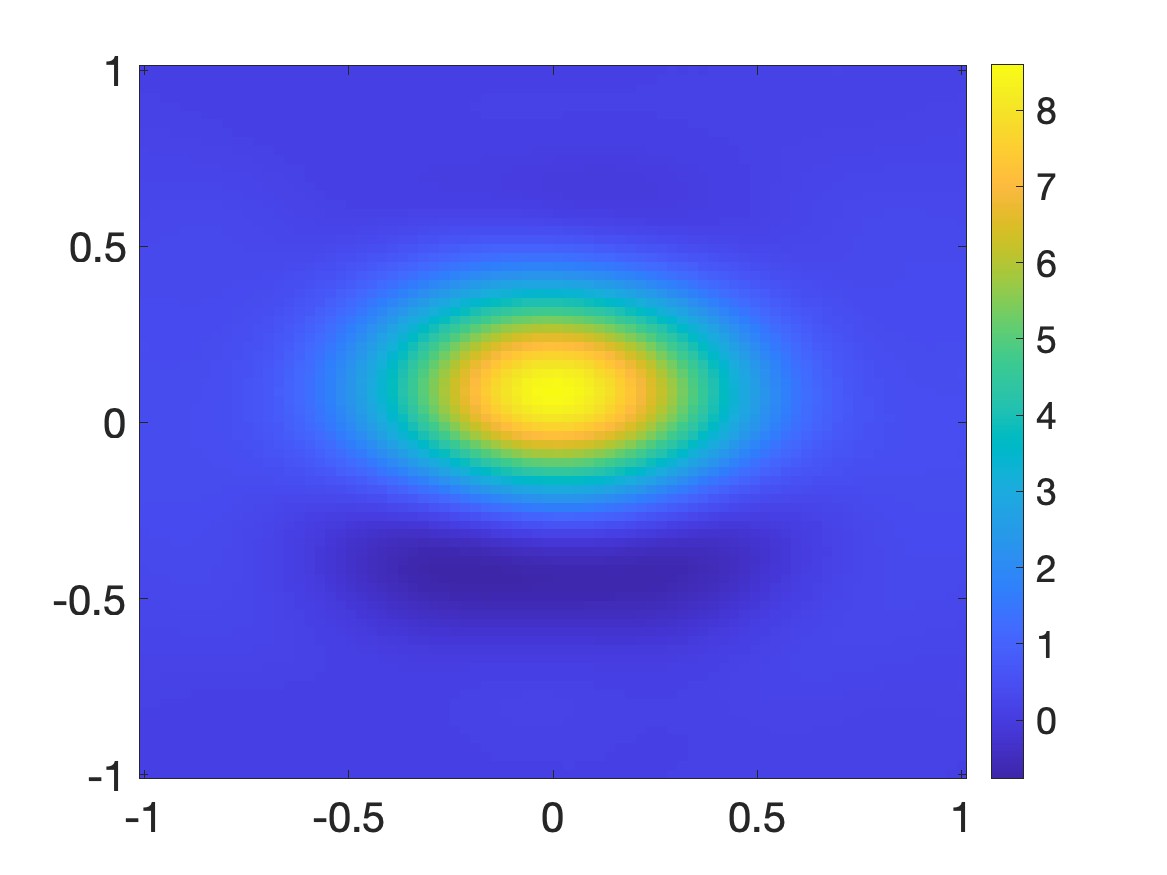}}				
		\quad
		\subfloat[\label{m1 error}]{\includegraphics[width = 0.3\textwidth]{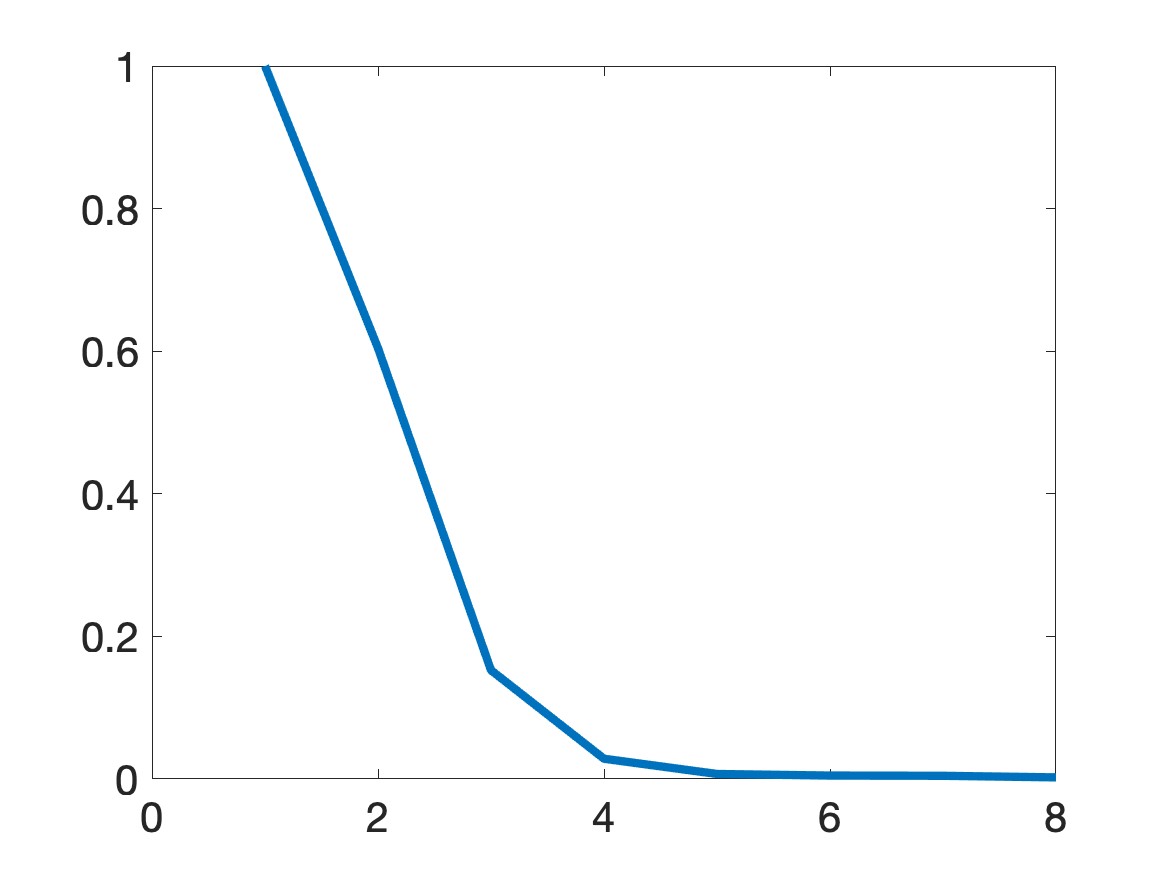}}
	\caption{\label{example 1} Test 1. The reconstruction of the source function. The initial solution of the source function $p^{(0)}(\x) = 0, \forall \x\in \Omega$. (a) The function $p_{\rm true}$.
	(b) The computed source function $p_{comp}$ obtained by Step \protect\ref{pcomp} of Algorithm \protect\ref{alg}.
	(c) The consecutive relative error $\ds\frac{\Vert p^{(k)} - p^{(k - 1)}\Vert_{L^{\infty}(\Omega)}}{\Vert p^{(k)}\Vert_{L^{\infty}(\Omega)}}$ for $k =1,2,\dots,8$.
	The noise level of the data in this test is $20\%$.
	}
	\end{center}
\end{figure}

\noindent{ \bf{Test 2.}} In this test, the source function is defined as follows
\[
	p_{\rm true} = \left\{
		\begin{array}{ll}
			6 & (x+0.5)^2 + (y + 0.5)^2 < 0.35^2,\\
			8 & (x+0.5)^2 + (y - 0.5)^2 < 0.35^2,\\
			10 & (x-0.5)^2 + (y + 0.5)^2 < 0.35^2,\\
			0 & \mbox{otherwise.}
		\end{array}
	\right.
\]

The function F has the form of 
\[ F(s,\mathbf{p}) =  s(1-s) + \frac{1}{2}\left(\big|p_1\big| - \big|p_2\big|\right)\]

The true and the reconstructed source function, $p^*$ and $p_{comp}$ respectively, are displayed in Figure \protect\ref{example 2}. The true source includes three ball with different contrasts, see \protect\ref{m2 p true}. The computed source function in Figure \protect\ref{m2 p comp} clearly indicates the position of balls and it is a good approximation of the true one. The upper left ball has the true value 8 and the maximal computed one 8.3344 (relative error 4.18\%). The lower left ball has the true value 6 and the maximal computed one 6.2892 (relative error 4.82\%). The lower right ball has the true value 10 and the maximal computed one 11.0032 (relative error 10.03\%). Besides, the Figure \protect\ref{m1 error} illustrates our numerical method converges rapidly after only five (5) iterations.

\begin{figure}[!ht]
	\begin{center}
		\subfloat[\label{m2 p true}]{\includegraphics[width = 0.3\textwidth]{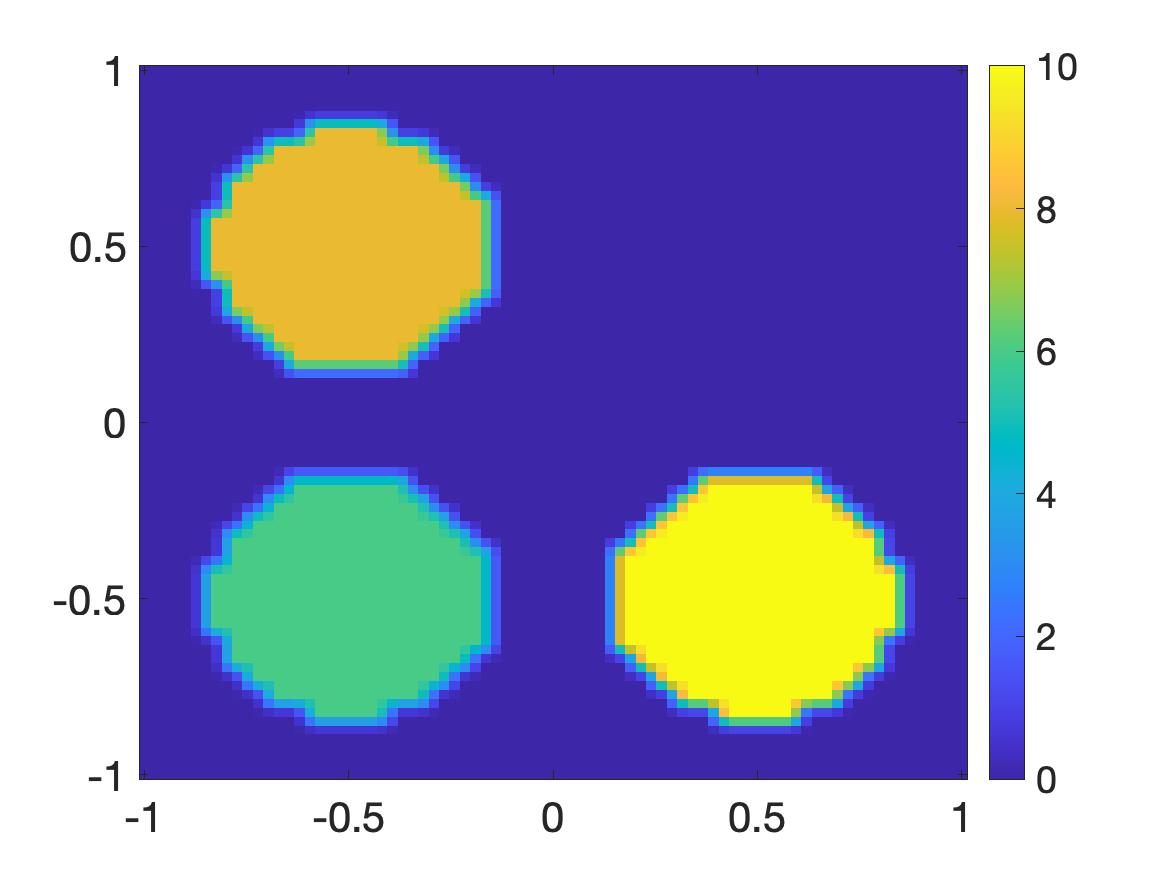}} \quad 
		\quad
		\subfloat[\label{m2 p comp}]{\includegraphics[width = 0.3\textwidth]{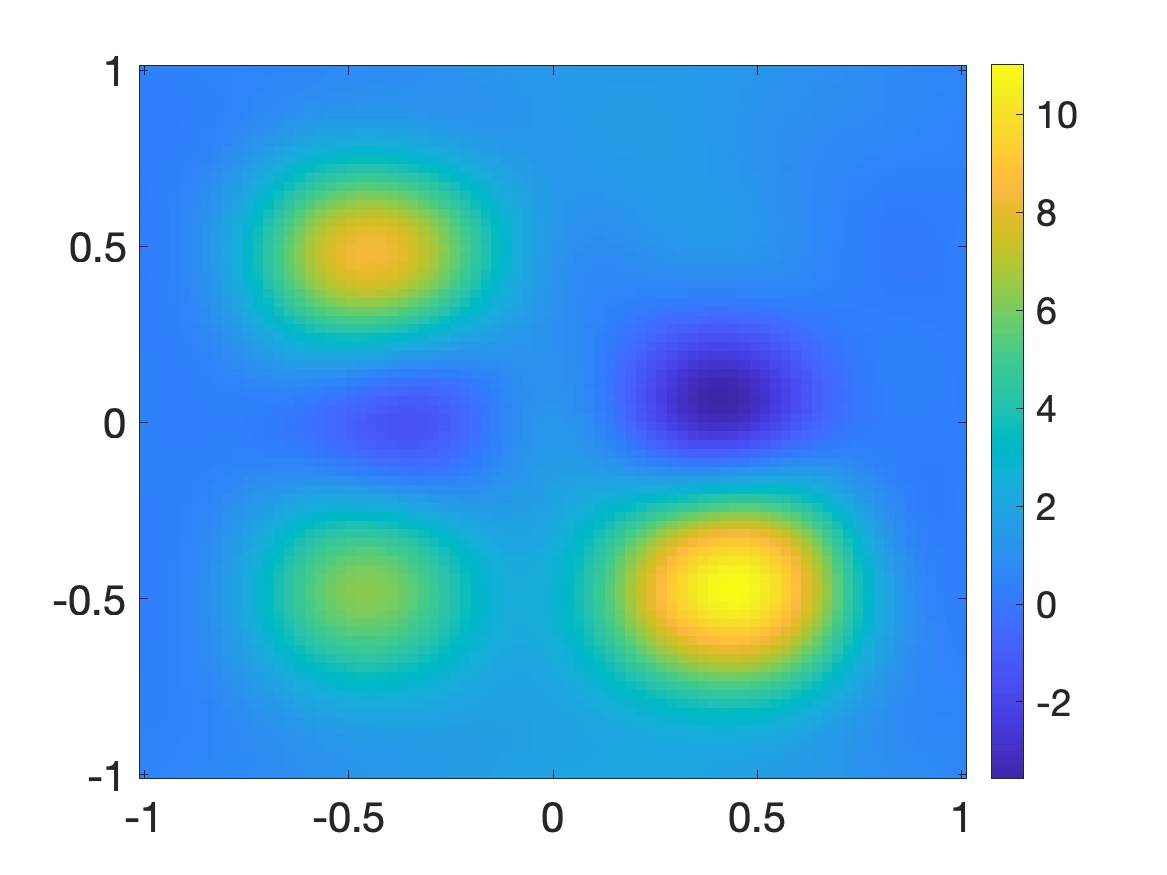}}			
		\quad
		\subfloat[\label{m2 error}]{\includegraphics[width = 0.3\textwidth]{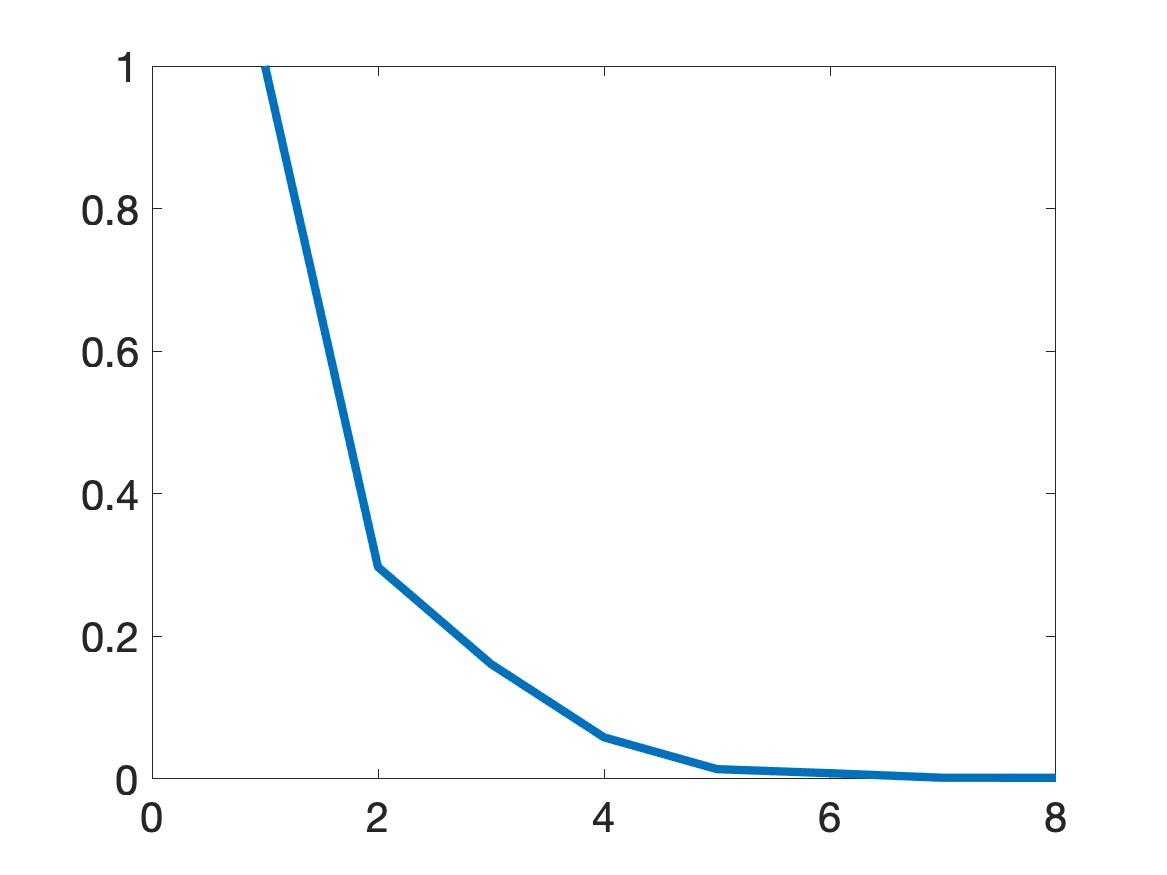}}
	\caption{\label{example 2} Test 2. The reconstruction of the source function. The initial solution of the source function $p^{(0)}(\x) = 0, \forall \x\in \Omega$. (a) The function $p_{\rm true}$.
	(b) The computed source function $p_{comp}$ obtained by Step \protect\ref{pcomp} of Algorithm \protect\ref{alg}.
	(c) The consecutive relative error $\ds\frac{\Vert p^{(k)} - p^{(k - 1)}\Vert_{L^{\infty}(\Omega)}}{\Vert p^{(k)}\Vert_{L^{\infty}(\Omega)}}$ for $k =1,2,\dots,8$.
	The noise level of the data in this test is $20\%$.
	}
	\end{center}
\end{figure} 

\noindent{ \bf{Test 3.}}
In this test, the source function is defined as follows
\[
	p_{\rm true} = \left\{
		\begin{array}{ll}
			1 & 0.4^2 < x^2 + y^2 < 0.8^2,\\
			0 & \mbox{otherwise.}
		\end{array}
	\right.
\]

The function F has the form of 
\[ F(s,\mathbf{p}) =   \sqrt{p_1^2 + p_2^2 + 1}\]
\begin{figure}[!ht]
	\begin{center}
		\subfloat[\label{m3 p true}]{\includegraphics[width = 0.3\textwidth]{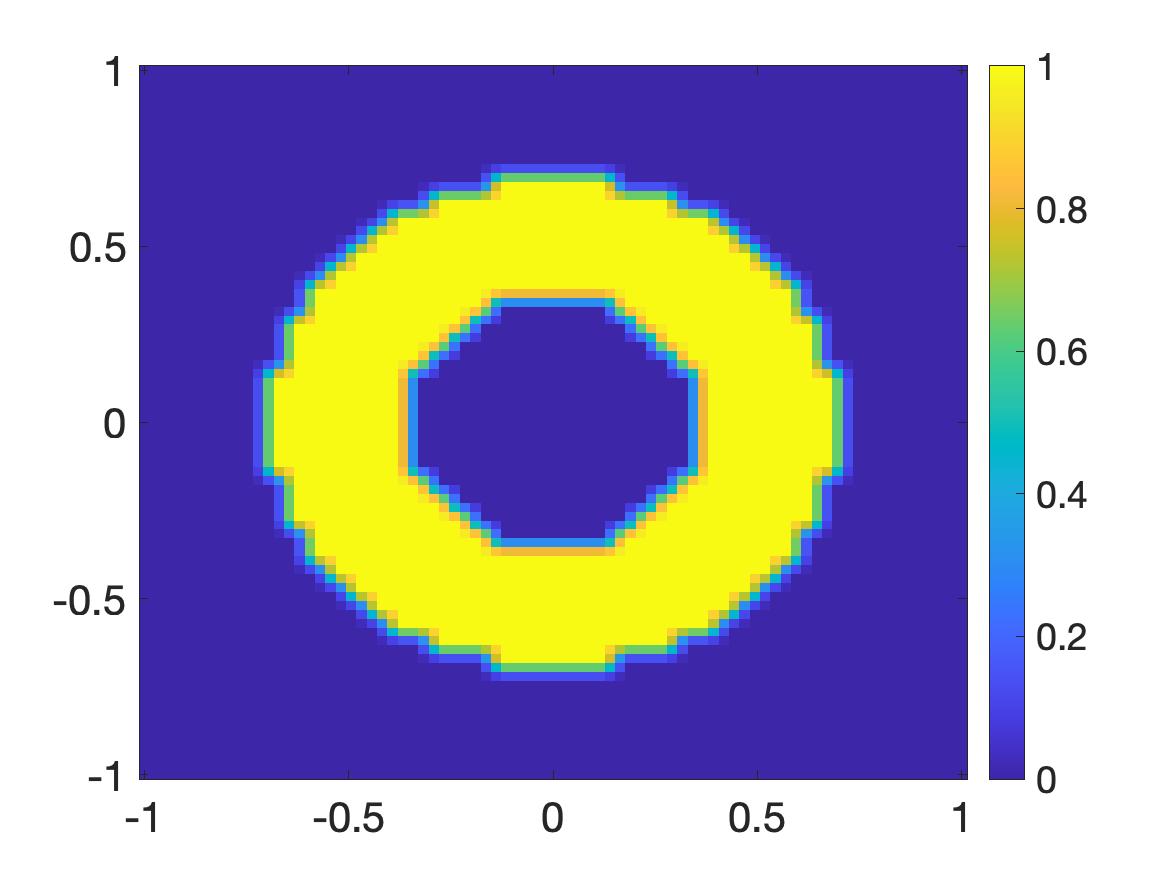}} \quad 
		\quad
		\subfloat[\label{m3 p comp}]{\includegraphics[width = 0.3\textwidth]{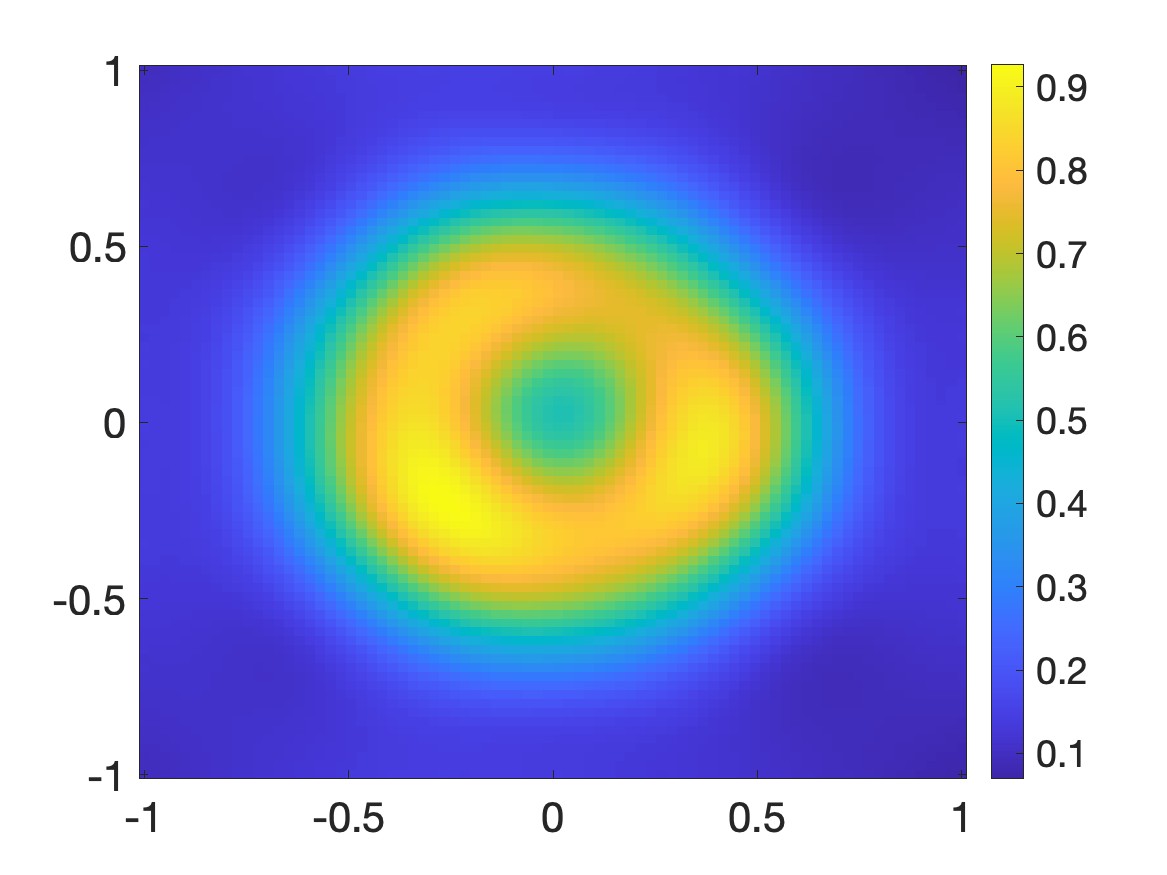}}		
		\quad
		\subfloat[\label{m3 error}]{\includegraphics[width = 0.3\textwidth]{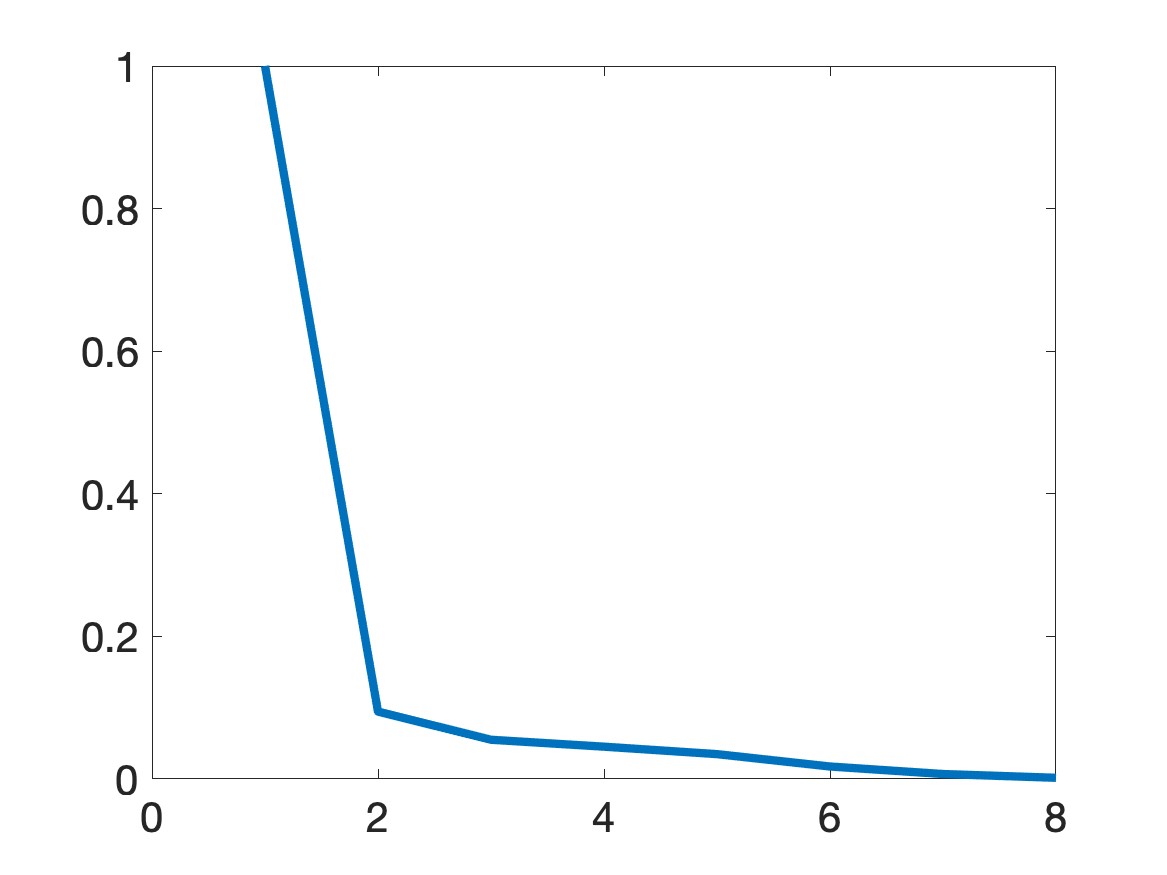}}
	\caption{\label{example 3} Test 3. The reconstruction of the source function. The initial solution of the source function $p^{(0)}(\x) = 0, \forall \x\in \Omega$. (a) The function $p_{\rm true}$.
	(b) The computed source function $p_{comp}$ obtained by Step \protect\ref{pcomp} of Algorithm \protect\ref{alg}.
	(c) The consecutive relative error $\ds\frac{\Vert p^{(k)} - p^{(k - 1)}\Vert_{L^{\infty}(\Omega)}}{\Vert p^{(k)}\Vert_{L^{\infty}(\Omega)}}$ for $k =1,2,\dots,8$.
	The noise level of the data in this test is $20\%$.
	}
	\end{center}
\end{figure} 

The true and the reconstructed source function, $p^*$ and $p_{comp}$ respectively, are displayed in Figure \protect\ref{example 3}. The true source includes a ring with contrast 1, see \protect\ref{m3 p true}. The  Figure \protect\ref{m3 p comp} shows the computed source function which clearly indicates the position of the ring and the void inside. The maximal value of the ring is 0.9226 (relative error 7.74\%). Besides, the consecutive relative errors in the Figure \protect\ref{m1 error} illustrates the fast convergence of our numerical method. The convergence is obtained after seven (7) iterations.

%
%
%

\section{Concluding remarks} 
\label{sec remarks}
In this paper, we introduce a new iterative method to solve an inverse source problem for a nonlinear parabolic equations.
The first step of our numerical method is to derive a system of nonlinear elliptic equations whose solutions yields directly the solution for the inverse source problem. We then propose an iterative scheme to solve that nonlinear system. Our numerical method is global convergent in the sense that: 
\begin{enumerate}
\item[(i)] It provides a good approximation to the true source function. 
\item[(ii)] It does not require any knowledge of the true source function. This means a good initial guess is not necessary.
\end{enumerate}
In our computation, we start our iterative scheme from the zero vector. The convergence of our scheme was proved rigorously. Numerical results are present to verify our theoretical part.

\vspace{0.2in}

\noindent{\bf Acknowledgment.}
The author sincerely appreciates Dr. Loc H. Nguyen for many fruitful discussions that strongly improve the mathematical results and the presentation of this paper.

This work  was partially supported by US Army Research Laboratory and US Army
Research Office grant W911NF-19-1-0044, by National Science Foundation grant DMS-2208159, and
by funds provided by the Faculty Research Grant program at UNC Charlotte Fund No. 111272.

\bibliographystyle{plain}
\bibliography{research}

\begin{thebibliography}{10}

\bibitem{BadiaDuong:jiip2002}
A.~El Badia and T.~Ha-Duong.
\newblock On an inverse source problem for the heat equation. application to a
  pollution detection problem.
\newblock {\em Journal of Inverse and Ill-posed Problems}, 10:585--599, 2002.

\bibitem{KlibanovNik:ra2017}
A.~B. Bakushinskii, M.~V. Klibanov, and N.~A. Koshev.
\newblock Carleman weight functions for a globally convergent numerical method
  for ill-posed {C}auchy problems for some quasilinear pdes.
\newblock {\em Nonlinear Anal. Real World Appl}, 34:201--224, 2017.

\bibitem{BeilinaKlibanovBook}
L.~Beilina and M.~V. Klibanov.
\newblock {\em Approximate Global Convergence and Adaptivity for Coefficient
  Inverse Problems}.
\newblock Springer, New York, 2012.

\bibitem{BukhgeimKlibanov:smd1981}
A.~L. Bukhgeim and M.~V. Klibanov.
\newblock Uniqueness in the large of a class of multidimensional inverse
  problems.
\newblock {\em Soviet Math. Doklady}, 17:244--247, 1981.

\bibitem{Carleman:1933}
T.~Carleman.
\newblock Sur les systemes lineaires aux derivees partielles du premier ordre a
  deux variables.
\newblock {\em C. R. Acad. Sci. Paris}, 197:471--474, 1933.

\bibitem{Klibanov:siam1997}
M.~V. Klibanov.
\newblock Global convexity in a three-dimensional inverse acoustic problem.
\newblock {\em SIAM Journal on Mathematical Analysis}, 28(6):1371--1388, 1997.

\bibitem{Klibanov:ip2006}
M.~V. Klibanov.
\newblock Estimates of initial conditions of parabolic equations and
  inequalities via lateral {C}auchy data.
\newblock {\em Inverse Problems}, 22:495--514, 2006.

\bibitem{Klibanov:anm2015}
M.~V. Klibanov.
\newblock Carleman estimates for the regularization of ill-posed {C}auchy
  problems.
\newblock {\em Applied Numerical Mathematics}, 94:46--74, 2015.

\bibitem{Klibanov:jiip2017}
M.~V. Klibanov.
\newblock Convexification of restricted {D}irichlet to {N}eumann map.
\newblock {\em J. Inverse and Ill-Posed Problems}, 25(5):669--685, 2017.

\bibitem{KlibanovIoussoupova:siam1995}
M.~V. Klibanov and O.~V. Ioussoupova.
\newblock Uniform strict convexity of a cost functional for three-dimensional
  inverse scattering problem.
\newblock {\em SIAM Journal on Mathematical Analysis}, 26(1):147--179, 1995.

\bibitem{Klibanov:cma2019}
M.~V. Klibanov and A.~E. Kolesov.
\newblock Convexification of a 3-{D} coefficient inverse scattering problem.
\newblock {\em Computers \& Mathematics with Applications}, 77(6):1681--1702,
  2019.

\bibitem{Klibanov:siam2019}
M.~V. Klibanov, A.~E. Kolesov, and D-L. Nguyen.
\newblock Convexification method for an inverse scattering problem and its
  performance for experimental backscatter data for buried targets.
\newblock {\em SIAM Journal on Imaging Sciences}, 12(1):576--603, 2019.

\bibitem{Klibanov:ip2018}
M.~V. Klibanov, A.~E. Kolesov, A.~Sullivan, and L.~Nguyen.
\newblock A new version of the convexification method for a 1{D} coefficient
  inverse problem with experimental data.
\newblock {\em Inverse Problems}, 34(11):115014, 2018.

\bibitem{KlibanovLeNguyen:siam2020}
M.~V. Klibanov, T.~T. Le, and L.~H. Nguyen.
\newblock Numerical solution of a linearized travel time tomography problem
  with incomplete data.
\newblock {\em SIAM Journal on Scientific Computing}, 42(5):B1173--B1192, 2020.

\bibitem{KlibanovLeNguyen:ipi2021}
M.~V. Klibanov, T.~T. Le, L.~H. Nguyen, A.~Sullivan, and L.~Nguyen.
\newblock Convexification-based globally convergent numerical method for a 1{D}
  coefficient inverse problem with experimental data.
\newblock {\em Inverse Problems and Imaging}, 0:--, 2021.

\bibitem{KlibanovLi:book2021}
M.~V. Klibanov and J.~Li.
\newblock {\em Inverse Problems and Carleman Estimates: Global Uniqueness,
  Global Convergence and Experimental Data}.
\newblock De Gruyter, 2021.

\bibitem{KlibanovLiJingzhi:siam2019}
M.~V. Klibanov, J.~Li, and W.~Zhang.
\newblock Convexification for the inversion of a time dependent wave front in a
  heterogeneous medium.
\newblock {\em SIAM Journal on Applied Mathematics}, 79(5):1722--1747, 2019.

\bibitem{KlibanovLiJingzhi:ip2019}
M.~V. Klibanov, J.~Li, and W.~Zhang.
\newblock Convexification of electrical impedance tomography with restricted
  {D}irichlet-to-{N}eumann map data.
\newblock {\em Inverse problems}, 35(3):035005, 2019.

\bibitem{KlibanovLiJingzhi:ip2020}
M.~V. Klibanov, J.~Li, and W.~Zhang.
\newblock Convexification for an inverse parabolic problem.
\newblock {\em Inverse Problems}, 36(8):085008, 2020.

\bibitem{KlibanovNguyen:ip2019}
M.~V. Klibanov and L.~H. Nguyen.
\newblock Pde-based numerical method for a limited angle {X}-ray tomography.
\newblock {\em Inverse Problems}, 35(4):045009, 2019.

\bibitem{LadyZhenskaya:ams1968}
O.~A. Ladyzhenskaya, V.~A. Solonnikov, and N.~N. Ural'tseva.
\newblock {\em Linear and quasilinear equations of Parabolic Type}, volume~23.
\newblock American Mathematical Society, Providence, RI, 1968.

\bibitem{LattesLions:book1969}
R.~Lattes and J-L. Lions.
\newblock The method of quasi-reversibility: applications to partial
  differential equations.
\newblock Technical report, 1969.

\bibitem{Lavrentiev:AMS1986}
M.~M. Lavrent'ev, V.~G. Romanov, and Shishat$\cdot$ski\u{i}.
\newblock {\em Ill-Posed Problems of Mathematical Physics and Analysis}.
\newblock Translations of Mathematical Monographs. AMS, Providence: RI, 1986.

\bibitem{LeKlibanovNguyen:ip2022}
T.~T. Le, M.~V. Klibanov, L.~H. Nguyen, A.~Sullivan, and L.~Nguyen.
\newblock Carleman contraction mapping for a 1{D} inverse scattering problem
  with experimental time-dependent data.
\newblock {\em Inverse Problems}, 38(4):045002, feb 2022.

\bibitem{LeNguyen:jiip2020}
T.~T. Le and L.~H. Nguyen.
\newblock A convergent numerical method to recover the initial condition of
  nonlinear parabolic equations from lateral cauchy data.
\newblock {\em Journal of Inverse and Ill-posed Problems}, 2020.

\bibitem{LeNguyen:arxiv2021}
T.~T. Le and L.~H. Nguyen.
\newblock The gradient descent method for the convexification to solve boundary
  value problems of quasi-linear pdes and a coefficient inverse problem.
\newblock {\em arXiv preprint arXiv:2103.04159}, 2021.

\bibitem{LeNguyenNguyenPowell:jsc2021}
T.~T. Le, L.~H. Nguyen, T-P. Nguyen, and W.~Powell.
\newblock The {Q}uasi-reversibility method to numerically solve an inverse
  source problem for hyperbolic equations.
\newblock {\em Journal of Scientific Computing}, 87(90), 2021.

\bibitem{LiYamamotoZou:cpaa2009}
J.~Li, M.~Yamamoto, and J.~Zou.
\newblock Conditional stability and numerical reconstruction of initial
  temperature.
\newblock {\em Communications on Pure and Applied Analysis}, 8:361--382, 2009.

\bibitem{LiNguyen:IPSE2019}
Q.~Li and L.~H. Nguyen.
\newblock Recovering the initial condition of parabolic equations from lateral
  {C}auchy data via the quasi-reversibility method.
\newblock {\em Inverse Problems in Science and Engineering}, pages 580--598,
  2019.

\bibitem{NNT:arxiv2022}
D-L. Nguyen, L.~H. Nguyen, and T.~Truong.
\newblock The {C}arleman-based contraction principle to reconstruct the
  potential of nonlinear hyperbolic equations.
\newblock {\em preprint arXiv:arXiv:2204.06060}, 2022.

\bibitem{MinhLoc:tams2015}
H-M. Nguyen and L.~H. Nguyen.
\newblock Cloaking using complementary media for the {H}elmholtz equation and a
  three spheres inequality for second order elliptic equations.
\newblock {\em Transaction of the American Mathematical Society}, 2:93--112,
  2015.

\bibitem{Nguyen:cma2020}
L.~H. Nguyen.
\newblock A new algorithm to determine the creation or depletion term of
  parabolic equations from boundary measurements.
\newblock {\em Computers \& Mathematics with Applications}, 80(10):2135--2149,
  2020.

\bibitem{LocNguyen:arXiv2022}
L.~H. Nguyen.
\newblock The {C}arleman-contraction method to solve quasi-linear elliptic
  equations.
\newblock {\em arXiv preprint arXiv:2203.12694}, 2022.

\bibitem{NguyenKlibanov:ip2022}
L.~H. Nguyen and M.~V. Klibanov.
\newblock Carleman estimates and the contraction principle for an inverse
  source problem for nonlinear hyperbolic equations.
\newblock {\em Inverse Problems}, 38(3):035009, feb 2022.

\bibitem{NguyenLiKlibanov:IPI2019}
L.~H. Nguyen, Q.~Li, and M.~V. Klibanov.
\newblock A convergent numerical method for a multi-frequency inverse source
  problem in inhomogenous media.
\newblock {\em Inverse Problems and Imaging}, 13:1067--1094, 2019.

\bibitem{Protter:1960}
M.~H. Protter.
\newblock Unique continuation for elliptic equations.
\newblock {\em Trans. Amer. Math. Soc.}, 95:81--91, 1960.

\bibitem{SmirnovKlibanov:siam2019}
A.~V. Smirnov, M.~V. Klibanov, and L.~H. Nguyen.
\newblock On an inverse source problem for the full radiative transfer equation
  with incomplete data.
\newblock {\em SIAM Journal on Scientific Computing}, 41(5):B929--B952, 2019.

\bibitem{KhoaKlibanov:ip2020}
K.~A. Vo, G.~W. Bidney, M.~V. Klibanov, L.~H. Nguyen, L.~Nguyen, A.~J.
  Sullivan, and V.~N. Astratov.
\newblock Convexification and experimental data for a 3{D} inverse scattering
  problem with the moving point source.
\newblock {\em Inverse Problems}, 36(8):085007, 2020.

\bibitem{KhoaKlibanov:ipse2021}
K.~A. Vo, G.~W. Bidney, M.~V. Klibanov, L.~H. Nguyen, L.~Nguyen, A.~J.
  Sullivan, and V.~N. Astratov.
\newblock An inverse problem of a simultaneous reconstruction of the dielectric
  constant and conductivity from experimental backscattering data.
\newblock {\em Inverse Problems in Science and Engineering}, 29(5):712--735,
  2021.

\bibitem{KhoaKlibanov:siam2020}
K.~A. Vo, M.~V. Klibanov, and L.~H. Nguyen.
\newblock Convexification for a three-dimensional inverse scattering problem
  with the moving point source.
\newblock {\em SIAM Journal on Imaging Sciences}, 13(2):871--904, 2020.

\end{thebibliography}

\end{document}